\newcommand{\seq}{\subseteq}
\newcommand{\stm}{\setminus}
\newcommand{\est}{\varnothing}
\newcommand{\alp}{\alpha}
\newcommand{\eps}{\varepsilon}
\newcommand{\kap}{\kappa}
\newcommand{\lam}{\lambda}
\renewcommand{\phi}{\varphi}
\newcommand{\sig}{\sigma}
\newcommand{\Gam}{{\Gamma}}
\newcommand{\Del}{{\Delta}}
\newcommand{\Ome}{{\Omega}}
\newcommand{\C}{{\mathbb C}}
\newcommand{\R}{{\mathbb R}}
\newcommand{\oA}{{\overline A}}
\DeclareMathOperator{\rk}{rk}
\renewcommand{\(}{\left(}
\renewcommand{\)}{\right)}
\newcommand{\<}{\langle}
\renewcommand{\>}{\rangle}
\newcommand{\longc}{,\ldots,}
\newcommand{\longp}{+\dotsb+}
\newcommand{\sub}[1]{_{\substack{#1}}}
\newcommand{\refl}[1]{\ref{l:#1}}
\newcommand{\refc}[1]{\ref{c:#1}}
\newcommand{\reft}[1]{\ref{t:#1}}
\newcommand{\refs}[1]{\ref{s:#1}}
\newcommand{\refb}[1]{\cite{b:#1}}
\newcommand{\refe}[1]{\eqref{e:#1}}
\newtheorem{lemma}{Lemma}
\newtheorem{theorem}{Theorem}
\newtheorem{corollary}{Corollary}
\newcommand{\showall}{no}          
\title[Discrete Norms]%
  {Discrete Norms of a Matrix \\ and the Converse to the Expander Mixing Lemma}
\author{Vsevolod F. Lev}
\address{Department of Mathematics, The university of Haifa at Oranim,
  Tivon 36006, Israel}
\email{seva@math.haifa.ac.il}
\keywords{Matrix Norms; Graph Eigenvalues; Second Singular Value.}
\subjclass[2010]%
  {Primary: 05C50;   
   Secondary: 15A18, 
              15A60} 
\begin{document}
\baselineskip = 16pt

\begin{abstract}
We define the \emph{discrete norm} of a complex $m\times n$ matrix $A$ by
  $$ \|A\|_\Del := \max_{0\ne\xi\in\{0,1\}^n} \frac{\|A\xi\|}{\|\xi\|}, $$
and show that
  $$ \frac c{\sqrt{\log h(A)+1}}\,\|A\| \le \|A\|_\Del \le \|A\|, $$
where $c>0$ is an explicitly indicated absolute constant,
$h(A)=\sqrt{\|A\|_1\|A\|_\infty}/\|A\|$, and $\|A\|_1,\|A\|_\infty$, and
$\|A\|=\|A\|_2$ are the induced operator norms of $A$. Similarly, for the
\emph{discrete Rayleigh norm}
  $$ \|A\|_P := \max_{\sub{0\ne\xi\in\{0,1\}^m \\ 0\ne\eta\in\{0,1\}^n}}
                                     \frac{|\xi^tA\eta|}{\|\xi\|\|\eta\|} $$
we prove the estimate
  $$ \frac c{\log h(A)+1}\,\|A\| \le \|A\|_P \le \|A\|. $$
These estimates are shown to be essentially best possible.

As a consequence, we obtain another proof of the (slightly sharpened and
generalized version of the) converse to the expander mixing lemma by
Bollob\'as-Nikiforov and Bilu-Linial.
\end{abstract}

\maketitle

\section{Summary of results}\label{s:intro}

For a complex matrix $A$ with $n$ columns, we define the \emph{discrete norm}
of $A$ by
  $$ \|A\|_\Del := \max_{0\ne\xi\in\{0,1\}^n} \frac{\|A\xi\|}{\|\xi\|}, $$
where the maximum is over all non-zero $n$-dimensional binary vectors $\xi$,
and $\|\cdot\|$ denotes the usual Euclidean vector norm. Recalling the standard
definition of the induced operator $L^2$-norm
  $$ \|A\| := \sup_{0\ne x\in\C^n} \frac{\|Ax\|}{\|x\|}, $$
we see at once that $\|A\|_\Del\le\|A\|$, and one can expect that, moreover,
the two norms are not far from each other.

\subsection{Norm estimates}
Our first goal is to establish a result along the lines just indicated; to
state it, we introduce the notion of a \emph{height} of a matrix.

For $p\in[1,\infty]$, let $\|A\|_p$ denote the induced operator $L^p$-norm of
the matrix $A$:
  $$ \|A\|_p := \sup_{0\ne x\in\C^n} \frac{\|Ax\|_p}{\|x\|_p}, $$
where $n$ is the number of columns of $A$. We are actually interested in the
following three special cases: the \emph{column norm} $\|A\|_1$, which can be
equivalently defined as the largest absolute column sum of $A$; the \emph{row
norm} $\|A\|_\infty$, which is the largest absolute row sum of $A$; and the
Euclidean norm $\|A\|_2$, commonly denoted simply by $\|A\|$. These three
norms are known to be related by the inequality
\begin{equation}\label{e:RieszThorin}
  \|A\|^2 \le \|A\|_1\|A\|_\infty,
\end{equation}
which can be obtained as a particular case of the Riesz-Thorin theorem, or
proved directly, using basic properties of matrix norms (in particular,
sub-multiplicativity of the $L^1$-norm):
 $$ \|A\|^2 = \|A^*A\| \le \|A^*A\|_1
                           \le \|A^*\|_1 \|A\|_1 = \|A\|_\infty \|A\|_1. $$
Also, if $A$ has $m$ rows and $n$ columns, then
\begin{equation}\label{e:sqrt-mn}
  \|A\|_1 \le \sqrt m\,\|A\|\ \text{and}\ \|A\|_\infty \le \sqrt n\,\|A\|.
\end{equation}
We now define the \emph{height} of a non-zero complex matrix $\|A\|$ by
  $$ h(A) := \sqrt{\|A\|_1\|A\|_\infty}/\|A\|; $$
thus, if $A$ is of size $m\times n$, then in view of~\refe{RieszThorin}
and~\refe{sqrt-mn},
\begin{equation}\label{e:hlemn}
  1 \le h(A) \le \sqrt[4]{mn}.
\end{equation}

Having defined the heights, we can state our principal results.
\begin{theorem}\label{t:Dnorm}
For any non-zero complex matrix $A$, we have
  $$  \frac{\|A\|}{8\sqrt{2}\sqrt{\log h(A)+2}} \le \|A\|_\Del \le \|A\|. $$
\end{theorem}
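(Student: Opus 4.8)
The upper bound $\|A\|_\Del\le\|A\|$ needs no argument, since $\{0,1\}^n\seq\C^n$; the whole content is the lower bound, and the plan is to manufacture one good binary test vector by rounding a vector that is optimal for $\|A\|$. Fix a unit $x$ with $\|Ax\|=\|A\|$ and let $u=Ax/\|A\|$ be the attached unit left singular vector, so that $A^*u=\|A\|x$ and $A^*Ax=\|A\|^2x$. For every $\xi\in\{0,1\}^n$ we have $\|A\xi\|\ge|\<u,A\xi\>|=|\<A^*u,\xi\>|=\|A\|\,|\<x,\xi\>|$, so it suffices to produce $\xi$ with $|\<x,\xi\>|\ge c\,\|\xi\|/\sqrt{\log h(A)+2}$. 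I would then strip the phases of $x$: writing $x=\Re x+i\,\Im x$ and splitting each of $\Re x,\Im x$ into its non-negative and its non-positive parts produces four non-negative real vectors, and the one of largest norm, call it $w$, satisfies $\|w\|\ge\tfrac12\|x\|=\tfrac12$, obeys $\|w\|_\infty\le\|x\|_\infty$ and $\|w\|_1\le\|x\|_1$, and has $|\<x,\xi\>|\ge\<w,\xi\>$ for every $\xi$ supported on $\operatorname{supp}w$. So the task reduces to rounding the non-negative vector $w$.

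The core is a dyadic decomposition of $w$ by coordinate size. Set $M=\|w\|_\infty$, $S_k=\{\,j:\ 2^{-k-1}M<w_j\le 2^{-k}M\,\}$ for $k\ge0$, with indicator $\xi^{(k)}$ and restriction $w^{(k)}=w|_{S_k}$. Since $w_j^2\le 2^{-k}M\,w_j$ and $w_j>2^{-k-1}M$ on $S_k$, we get $\<w,\xi^{(k)}\>\ge\sum_{j\in S_k}w_j\ge 2^{k}M^{-1}\|w^{(k)}\|^2$ and $\|\xi^{(k)}\|=|S_k|^{1/2}\le 2^{k+1}M^{-1}\|w^{(k)}\|$, hence
$$ \frac{\<w,\xi^{(k)}\>}{\|\xi^{(k)}\|}\ \ge\ \frac{\|w^{(k)}\|}{2}. $$
Thus if $w$ had only $N$ non-empty dyadic levels, some level would have $\|w^{(k)}\|\ge N^{-1/2}\|w\|$, and chaining the inequalities above with $\|A\xi\|\ge\|A\|\,\<w,\xi\>$ (for $\xi$ supported on $\operatorname{supp}w$) would give $\|A\|_\Del\ge\|A\|\,\|w\|/(2\sqrt N)\ge\|A\|/(4\sqrt N)$.

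In general $w$ may have of order $\log n$ non-empty levels, so the decisive step is to discard the light levels and bound the number of the remaining ones by $O(\log h(A))$. Keeping only the levels $k\le K$ discards the mass
$$ \sum_{k>K}\|w^{(k)}\|^2\ =\!\!\sum_{j:\,w_j\le 2^{-K-1}M}\!\!w_j^2\ \le\ 2^{-K-1}M\,\|w\|_1\ =\ 2^{-K-1}\,\|w\|_\infty\|w\|_1, $$
which is $\le\tfrac14\|w\|^2$ as soon as $K+1\ge 2+2\log_2\!\bigl(\|w\|_\infty\|w\|_1/\|w\|^2\bigr)=2+2\log_2 h(w^t)$, $w^t$ being the $1\times n$ matrix with row $w$. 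For such a $K$ the truncation $w'=\sum_{k\le K}w^{(k)}$ has $\|w'\|\ge\tfrac{\sqrt3}{2}\|w\|$ and at most $K+1=O(\log h(w^t))$ non-empty levels, so running the previous paragraph on $w'$ yields $\|A\|_\Del\gtrsim\|A\|/\sqrt{\log h(w^t)+2}$.

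It remains to control $\log h(w^t)$ by $\log h(A)$. Here I would use the singular-vector identity $\|A\|\,x_j=(A^*u)_j=\<a_j,u\>$, where $a_j$ is the $j$-th column of $A$: it gives $\|x\|_\infty\le\max_j\|a_j\|_2/\|A\|\le\sqrt{\|A\|_1\|A\|_\infty}/\|A\|=h(A)$ at once, and, combined with the dual bound $\|x\|_1=\|A^*u\|_1/\|A\|$ estimated through $\|u\|_\infty\le h(A)$ and the row and column sums of $A$, it should pin $h(w^t)^2\le 4\|w\|_\infty\|w\|_1\le 4\|x\|_\infty\|x\|_1$ down to a fixed power of $h(A)$, whence $\log h(w^t)+2=O(\log h(A)+2)$. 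I expect this last estimate — the claim that, on the part of the optimal singular vector carrying most of its mass, the magnitude profile has dynamic range only $h(A)^{O(1)}$, equivalently that the effective number of dyadic scales is $O(\log h(A))$ rather than $O(\log n)$ — to be the real obstacle; everything else is bookkeeping of the explicit constants (a factor $2$ from the phase split, $2$ from the dyadic rounding, and $2/\sqrt3$ from the mass truncation), which should combine to the stated $8\sqrt2$.
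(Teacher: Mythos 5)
Your rounding machinery --- phase-stripping, dyadic levels, and truncation of the light levels, which for any vector $v$ produces a binary $\xi$ with $|\<v,\xi\>|\gg\|v\|\|\xi\|/\sqrt{\log h(v)+2}$ --- is sound and is essentially the content of the paper's Lemmas~\refl{ell-nonneg}--\refl{ell-complex}. The fatal gap is exactly the step you flag at the end and hope is bookkeeping: the vector you propose to round, the extremal singular vector $x=A^*u/\|A\|$, need \emph{not} satisfy $h(x)\le h(A)^{O(1)}$, and in fact your opening reduction (``it suffices to find $\xi$ with $|\<x,\xi\>|\ge c\|\xi\|/\sqrt{\log h(A)+2}$'') asks for something that is false. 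Take $A=\bigoplus_{j=1}^{L}n_j^{-1}J_{n_j}$ with $n_j=4^j$, where $J_n$ is the all-ones matrix of order $n$. Then $\|A\|=\|A\|_1=\|A\|_\infty=1$, so $h(A)=1$; yet the unit vector $x$ whose coordinates equal $(Ln_j)^{-1/2}$ on the $j$-th block satisfies $Ax=x$ (a legitimate maximizer) and has $\|x\|_1\|x\|_\infty\asymp 2^L/L$, so $h(x)$ is exponentially large in $L$ while $h(A)=1$. Worse, optimizing over the block occupancies $s_j=|{\rm supp}\,\xi\cap B_j|\le 4^j$ shows that $|\<x,\xi\>|\le C\|\xi\|/\sqrt L$ for \emph{every} binary $\xi$, with an absolute constant $C$; so no rounding scheme applied to this $x$ can succeed. (The theorem itself survives --- the indicator of the largest block is a perfect test vector --- it is only the reduction to $\cos(x,\xi)$ that loses.) The non-uniqueness of the maximizer here is not a way out: your argument fixes an arbitrary maximizer and needs the height bound for it.

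The missing idea is the paper's Lemma~\refl{tt}: before any rounding, decompose the extremal vector into dyadic pieces at the large scale ratio $M=8h(A)^2+1$ and show, by an almost-orthogonality argument based on $|\<Au,Av\>|\le h(A)^2\|A\|^2\|u\|_\infty\|v\|_1$ (small when $u$ and $v$ live at scales separated by a factor $\ge M$), that some \emph{single} piece $z$ already satisfies $\|A^*z\|>\tfrac12\|A^*\|\|z\|$ while $\ell(z)<M$. One then rounds not $z$ but $A^*z$, whose height is at most $2h(A^*)h(z)=O(h(A)^2)$ precisely because $z$ has bounded height \emph{and} $\|A^*z\|$ is within a factor $2$ of maximal --- the control that fails for the full singular vector. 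In the block example this lemma isolates essentially one block, whose image rounds perfectly. With that pigeonholing step inserted in front, the rest of your plan goes through and coincides with the paper's proof.
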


In a similar vein, we define the \emph{discrete Rayleigh norm} of a complex
$m\times n$ matrix $A$ by
  $$ \|A\|_P := \max_{\sub{0\ne\xi\in\{0,1\}^m \\ 0\ne\eta\in\{0,1\}^n}}
                                     \frac{|\xi^tA\eta|}{\|\xi\|\|\eta\|} $$
(where the subscript $P$ stands for the capital Greek letter \emph{rho}), and
prove
\begin{theorem}\label{t:Pnorm}
For any non-zero complex matrix $A$, we have
  $$  \frac{\|A\|}{32\sqrt{2}\,(\log h(A)+4)} \le \|A\|_P \le \|A\|. $$
\end{theorem}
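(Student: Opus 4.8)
The plan is to bootstrap Theorem~\reft{Dnorm}, discretizing the two slots of the bilinear form $\xi^tA\eta$ one after the other. The upper bound $\|A\|_P\le\|A\|$ is immediate: for non-zero binary $\xi,\eta$ the Cauchy--Schwarz inequality and the definition of the operator norm give $|\xi^tA\eta|\le\|\xi\|\,\|A\eta\|\le\|A\|\,\|\xi\|\,\|\eta\|$, so the whole content lies in the lower bound.

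I would begin by applying Theorem~\reft{Dnorm} to $A$ itself, producing a non-zero vector $\eta\in\{0,1\}^n$ with $\|A\eta\|\ge\|A\|\,\|\eta\|/(8\sqrt2\,\sqrt{\log h(A)+2})$, and I would set $y:=A\eta$. Since $\|A\|_P\ge|\xi^ty|/(\|\xi\|\,\|\eta\|)$ for every non-zero binary $\xi$, it then remains to discretize the remaining slot, i.e.\ to bound the discrete norm $\|y^t\|_\Del$ of the $1\times m$ matrix $y^t$ from below in terms of $\|y\|$. For this I would apply Theorem~\reft{Dnorm} a second time, now to $y^t$, obtaining a non-zero $\xi\in\{0,1\}^m$ with $|\xi^ty|\ge\|y\|\,\|\xi\|/(8\sqrt2\,\sqrt{\log h(y^t)+2})$. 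Substituting both estimates into $\|A\|_P\ge|\xi^ty|/(\|\xi\|\,\|\eta\|)$ gives
\[
  \|A\|_P \ \ge\ \frac{\|A\|}{128\,\sqrt{(\log h(A)+2)\,(\log h(y^t)+2)}}.
\]

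The key point, and the step I expect to be the main obstacle, is to keep the height $h(y^t)$ of the intermediate vector $y$ under control: a priori $h(y^t)$ could be as large as $m^{1/4}$, which would only produce a worthless $\sqrt{\log m}$ bound. Here I would exploit that $\eta$ is \emph{binary}: since $\|\eta\|_\infty=1$ and $\|\eta\|_1=\|\eta\|^2$, the operator inequalities for the row and column norms give $\|y\|_\infty=\|A\eta\|_\infty\le\|A\|_\infty$ and $\|y\|_1=\|A\eta\|_1\le\|A\|_1\|\eta\|^2$. Since $h(y^t)=\sqrt{\|y\|_\infty\|y\|_1}/\|y\|$ for a row vector, combining these two bounds with the lower estimate for $\|y\|=\|A\eta\|$ from the first step yields $h(y^t)\le 8\sqrt2\,h(A)\,\sqrt{\log h(A)+2}$, and therefore $\log h(y^t)+2\le\log h(A)+\tfrac12\log(\log h(A)+2)+O(1)$.

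Finally I would feed this back into the displayed inequality. Using the elementary bound $\log(x+2)\le x+1$ $(x\ge0)$ to absorb the $\log\log$ term, the product under the square root becomes at most a constant multiple of $(\log h(A)+4)^2$, and keeping track of the constants produces $\|A\|_P\ge\|A\|/(32\sqrt2\,(\log h(A)+4))$. One caveat worth flagging: the black-box double use of Theorem~\reft{Dnorm} already yields the right \emph{order} $\|A\|/(\log h(A)+1)$, but to land the precise constant $32\sqrt2$ one really wants the second application in the sharper form available for a single row --- where passing to the non-negative part of $y$ costs only a factor $\sqrt2$ rather than invoking the general matrix argument --- which is a routine strengthening of the proof of Theorem~\reft{Dnorm}.
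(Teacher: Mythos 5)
Your proposal follows essentially the same route as the paper: discretize one slot first, observe that because $\eta$ is binary ($\|\eta\|_1=\|\eta\|^2$, $\|\eta\|_\infty=1$) the image $y=A\eta$ has height at most $h(A)$ times the loss factor from the first step, and then discretize the remaining slot against $y$. The one deviation is that you perform the second discretization by re-invoking Theorem~\reft{Dnorm} as a black box on the row matrix $y^t$, whereas the paper applies its single-vector cosine lemma (Lemma~\refl{ell-complex}) directly to $A\xi$; the black-box version is valid and gives the right order $\|A\|/(\log h(A)+1)$, but it compounds constants to roughly $128\sqrt{3/2}$ in front of $\log h(A)+O(1)$, so your penultimate sentence claiming the exact constant $32\sqrt2$ does not follow from the computation as written. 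The ``sharper form for a single row'' that you correctly flag as the missing ingredient is precisely the paper's Lemma~\refl{ell-complex}, and with that substitution your argument coincides with the paper's proof.
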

We remark that the trivial upper bounds in Theorems~\reft{Dnorm}
and~\reft{Pnorm} are included solely for comparison purposes. The proofs of
the theorems are presented in Section~\refs{DPnorms}.

Theorem~\reft{Dnorm} to our knowledge has never appeared in the literature,
while Theorem~\reft{Pnorm} extends and refines results of Bollob\'as and
Nikiforov \refb{bn}, and Bilu and Linial \refb{bili}. Specifically, somewhat
hidden in the proof of \cite[Theorem~2]{b:bn} is the assertion that if $A$ is
Hermitian of order $n\ge 2$, then $\|A\|_P\gg\|A\|/\log n$, and
\cite[Lemma~3.3]{b:bili} essentially says that if $A$ is a symmetric real
matrix with the diagonal entries sufficiently small in absolute value, then
$\|A\|_P\gg\|A\|/\big(\log(\|A\|_\infty/\|A\|_P)+1)$. (The notation $X\ll Y$
will be used throughout to indicate that there is an absolute constant $C$
such that $|X|\le C|Y|$.) The former of these results follows from
Theorem~\reft{Pnorm} in view of \refe{hlemn}; to derive the latter just
observe that for $A$ symmetric,
  $$ h(A) = \|A\|_\infty/\|A\| \le \|A\|_\infty/\|A\|_P. $$
It is worth pointing out that our argument is completely distinct from those
used in \refb{bn} and \refb{bili}.

As an application, consider the situation where $A$ is the adjacency matrix
of an undirected graph; thus, $\|A\|$ is the spectral radius of the graph,
and $\|A\|_1=\|A\|_\infty$ is its maximum degree. Identifying the vectors
$\xi,\eta\in\{0,1\}^n$ in the definitions of the discrete norms with the
corresponding subsets of the vertex set of the graph, as an immediate
consequence of Theorems \reft{Dnorm} and \reft{Pnorm} we get the following
corollaries allowing one to interpret the spectral radius combinatorially.
\begin{corollary}\label{c:Dnorm}
Let $(V,E)$ be a graph with the spectral radius $\rho$ and maximum degree
$\Del$. For a vertex $v\in V$ and a subset $X\seq V$, denote by $N_X(v)$ the
set of all neighbors of $v$ in $X$:
  $$ N_X(v) := \{ u\in V\colon uv\in E \}. $$
Then for any subset $X\seq V$ we have
  $$ \sum_{v\in V} |N_X(v)|^2 \le \rho^2 |X|, $$
and there exists a non-empty subset $X\seq V$ such that
  $$ \sum_{v\in V} |N_X(v)|^2
              \ge \frac{\rho^2}{128\big(\log(\Del/\rho)\,+2\big)}\, |X|. $$
\end{corollary}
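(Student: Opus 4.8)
The plan is to read the Corollary off directly from Theorem~\reft{Dnorm} applied to the adjacency matrix $A$ of the graph $(V,E)$, after matching up the relevant quantities. First I would record the basic dictionary: for a binary vector $\xi\in\{0,1\}^n$, writing $X\seq V$ for the vertex set whose indicator is $\xi$, the $v$-th coordinate of $A\xi$ is the number of neighbors of $v$ lying in $X$, that is $(A\xi)_v=|N_X(v)|$. Hence $\|A\xi\|^2=\sum_{v\in V}|N_X(v)|^2$ and $\|\xi\|^2=|X|$, so that
$$ \frac{\|A\xi\|^2}{\|\xi\|^2} = \frac1{|X|}\sum_{v\in V}|N_X(v)|^2 . $$

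For the upper bound I would simply use $\|A\xi\|\le\|A\|\,\|\xi\|$ together with the standard fact that the spectral radius of the graph equals $\rho=\|A\|$; substituting into the displayed identity and clearing denominators gives $\sum_{v\in V}|N_X(v)|^2\le\rho^2|X|$ for every $X\seq V$ (including $X=\est$, where both sides vanish).

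For the lower bound I would invoke Theorem~\reft{Dnorm}. Since the graph is undirected, $A$ is symmetric, so the largest absolute row sum and the largest absolute column sum of this $0$--$1$ matrix both equal the maximum degree, i.e.\ $\|A\|_1=\|A\|_\infty=\Del$; consequently $h(A)=\sqrt{\|A\|_1\|A\|_\infty}/\|A\|=\Del/\rho$. Theorem~\reft{Dnorm} then produces a non-zero $\xi\in\{0,1\}^n$ with
$$ \frac{\|A\xi\|^2}{\|\xi\|^2} \;\ge\; \frac{\|A\|^2}{128\,(\log h(A)+2)} \;=\; \frac{\rho^2}{128\,(\log(\Del/\rho)+2)} , $$
using $(8\sqrt2)^2=128$. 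Taking $X$ to be the support of this $\xi$ and substituting into the identity above yields the asserted lower bound.

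There is essentially no obstacle here: the content is entirely in Theorem~\reft{Dnorm}, and what remains is the routine bookkeeping of identifying $\|A\xi\|^2$ with $\sum_{v}|N_X(v)|^2$ and verifying that $h(A)=\Del/\rho$ for an adjacency matrix. The only minor point worth noting is that $\Del\ge\rho$, so $\log(\Del/\rho)+2>0$ and the stated bound is meaningful; this is immediate from~\refe{hlemn} (or from $\rho=\|A\|\le\|A\|_\infty=\Del$).
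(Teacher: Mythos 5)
Your proposal is correct and is exactly the paper's intended derivation: the paper treats this corollary as an immediate consequence of Theorem~\reft{Dnorm}, obtained by identifying $\xi\in\{0,1\}^n$ with $X\seq V$, noting $(A\xi)_v=|N_X(v)|$, $\rho=\|A\|$, $\|A\|_1=\|A\|_\infty=\Del$, hence $h(A)=\Del/\rho$, and squaring the constant $8\sqrt2$ to get $128$. Nothing is missing.
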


\begin{corollary}\label{c:Pnorm}
Let $(V,E)$ be a graph with the spectral radius $\rho$ and maximum degree
$\Del$. For subsets $X,Y\seq V$, denote by $e(X,Y)$ the number of edges
joining a vertex from $X$ with a vertex from $Y$, those edges having both
their endpoints in $X\cap Y$ being counted twice:
  $$ e(X,Y) := | \{ (x,y)\in X\times Y\colon xy\in E \} |. $$
Then for any subsets $X,Y\seq V$ we have
  $$ e(X,Y) \le \rho \sqrt{|X||Y|}, $$
and there exist non-empty subsets $X,Y\seq V$ such that
  $$ e(X,Y) \ge
       \frac{\rho}{32\sqrt2\big(\log(\Del/\rho)\,+4\big)}\, \sqrt{|X||Y|}. $$
\end{corollary}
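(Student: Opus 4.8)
The plan is to deduce Corollary~\refc{Pnorm} directly from Theorem~\reft{Pnorm}, applied to the adjacency matrix $A=(a_{uv})_{u,v\in V}$ of the graph, where $a_{uv}=1$ if $uv\in E$ and $a_{uv}=0$ otherwise. Assume first $E\ne\est$, so that $A\ne 0$ and the quantity $\log(\Del/\rho)$ makes sense. I begin by recording the dictionary between the matrix-theoretic and the graph-theoretic quantities. Since $A$ is a real symmetric matrix, its operator $L^2$-norm equals the largest absolute value of an eigenvalue, that is, $\|A\|=\rho$; and the largest absolute column sum and the largest absolute row sum of $A$ are both equal to the largest vertex degree, so $\|A\|_1=\|A\|_\infty=\Del$ and hence $h(A)=\sqrt{\|A\|_1\|A\|_\infty}/\|A\|=\Del/\rho$. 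Finally, for the $0/1$ indicator vectors $\xi,\eta$ of subsets $X,Y\seq V$ one has $\|\xi\|=\sqrt{|X|}$, $\|\eta\|=\sqrt{|Y|}$, and
  $$ \xi^tA\eta = \sum_{u\in X}\sum_{v\in Y} a_{uv}
               = |\{(x,y)\in X\times Y\colon xy\in E\}| = e(X,Y) \ge 0, $$
where edges inside $X\cap Y$ are indeed counted twice, matching the stated convention.

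For the upper bound, I would observe that for arbitrary $X,Y\seq V$ with indicator vectors $\xi,\eta$, the Cauchy--Schwarz inequality together with the definition of $\|A\|$ gives
  $$ e(X,Y) = \xi^tA\eta \le \|\xi\|\,\|A\eta\| \le \|A\|\,\|\xi\|\,\|\eta\|
                                                   = \rho\sqrt{|X||Y|}, $$
which is the asserted inequality; equivalently, this is just the trivial bound $\|A\|_P\le\|A\|$ of Theorem~\reft{Pnorm} translated into graph language, and it also covers the degenerate case $E=\est$ (where $\rho=0$ and $e(X,Y)=0$ always).

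For the lower bound, I would invoke the nontrivial inequality of Theorem~\reft{Pnorm}. Since $\xi^tA\eta=e(X,Y)\ge 0$, the absolute value in the definition of $\|A\|_P$ plays no role, and the maximum defining $\|A\|_P$ is attained at some nonzero binary vectors $\xi,\eta$, which are the indicator vectors of certain non-empty sets $X,Y\seq V$. For this particular pair $X,Y$,
  $$ \frac{e(X,Y)}{\sqrt{|X||Y|}} = \|A\|_P
       \ge \frac{\|A\|}{32\sqrt2\,(\log h(A)+4)}
        = \frac{\rho}{32\sqrt2\,(\log(\Del/\rho)+4)}, $$
which is exactly the claimed lower bound.

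I do not anticipate any genuine obstacle: the whole content of the corollary is already packaged in Theorem~\reft{Pnorm}, and the only points requiring care are the standard identifications $\|A\|=\rho$ and $\|A\|_1=\|A\|_\infty=\Del$ for the adjacency matrix of an undirected graph, the elementary computation $\xi^tA\eta=e(X,Y)$ with the double-counting of edges inside $X\cap Y$, and the harmless exclusion of the edgeless graph from the lower-bound assertion.
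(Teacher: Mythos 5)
Your proposal is correct and follows exactly the route the paper intends: Corollary~\refc{Pnorm} is obtained by specializing Theorem~\reft{Pnorm} to the adjacency matrix, using the identifications $\|A\|=\rho$, $\|A\|_1=\|A\|_\infty=\Del$ (hence $h(A)=\Del/\rho$), and $\xi^tA\eta=e(X,Y)$ for indicator vectors. The paper presents the corollary as an immediate consequence without spelling these details out, and your write-up supplies precisely those details, including the harmless treatment of the edgeless case.
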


\subsection{Second singular value estimates}
For a complex matrix $A$, let $\sig_2(A)$ denote its second singular value;
thus, for instance, if $A$ is Hermitian of order $n$ with the eigenvalues
$\lam_1\longc\lam_n$, then $\sig_2(A)$ is the second largest among the
absolute values $|\lam_1|\longc|\lam_n|$. By the second singular value of a
\emph{graph} we will mean the second singular value of its adjacency matrix.

From the singular value decomposition theorem it is easy to derive that if
$D$ is a matrix of the same size as $A$ and rank at most $1$, then
\begin{equation}\label{e:EYM}
  \|A-D\|\ge\sig_2(A);
\end{equation}
this is a particular case of the Eckart-Young-Mirsky theorem \refb{m} (see
also \refb{s} for the history of this theorem which has been re-discovered a
number of times). Below we choose $D$ to be the matrix all of whose elements
are equal to the arithmetic mean of the elements of $A$; we denote this
matrix by $\oA$. It is readily verified that $\|\oA\|_1\le\|A\|_1$ and
$\|\oA\|_\infty\le\|A\|_\infty$, whence, in view of \refe{EYM} and assuming
$\rk A\ge 2$,
\begin{equation}\label{e:hA-D}
  h(A-\oA) = \sqrt{\|A-\oA\|_1\|A-\oA\|_\infty} / \|A-\oA\|
                                 \le 2\sqrt{\|A\|_1\|A\|_\infty} / \sig_2(A).
\end{equation}
On the other hand, from \refe{EYM} and Theorem~\reft{Dnorm} we get
\begin{equation}\label{e:sig2leA-D}
  \sig_2(A) \le \|A-\oA\| \le 8\sqrt2\sqrt{\log h(A-\oA)+2}
                                                       \,\cdot\|A-\oA\|_\Del.
\end{equation}
Combining \refe{hA-D} and \refe{sig2leA-D}, we obtain
\begin{theorem}\label{t:sig2D}
Suppose that $A$ is a complex matrix of rank at least $2$, and let $\oA$ be
the identically-sized matrix all of whose elements are equal to the
arithmetic mean of the elements of $A$. Then, writing
$K:=2\sqrt{\|A\|_1\|A\|_\infty}/\sig_2(A)$, we have
  $$ \|A-\oA\|_\Del \ge \frac{\sig_2(A)}{8\sqrt 2\sqrt{\log K+2}}. $$
\end{theorem}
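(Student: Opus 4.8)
The plan is essentially bookkeeping: the statement is obtained by chaining together inequalities \refe{hA-D} and \refe{sig2leA-D}, both of which have already been assembled above from the Eckart--Young--Mirsky bound \refe{EYM} and Theorem~\reft{Dnorm}. First I would note that $\oA$, having all entries equal, has rank at most $1$, so \refe{EYM} applies with $D=\oA$ and gives $\|A-\oA\|\ge\sig_2(A)$; since $\rk A\ge2$ this is a positive lower bound, so the height $h(A-\oA)$ is well defined and all the quantities in sight make sense.

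Next I would bound this height. From the triangle inequality for the induced $L^1$- and $L^\infty$-norms together with $\|\oA\|_1\le\|A\|_1$ and $\|\oA\|_\infty\le\|A\|_\infty$ one gets $\|A-\oA\|_1\le2\|A\|_1$ and $\|A-\oA\|_\infty\le2\|A\|_\infty$, whence
$$ h(A-\oA)=\sqrt{\|A-\oA\|_1\|A-\oA\|_\infty}\big/\|A-\oA\|
   \le 2\sqrt{\|A\|_1\|A\|_\infty}\big/\sig_2(A)=K, $$
which is \refe{hA-D}. Applying Theorem~\reft{Dnorm} to the matrix $A-\oA$ and using $\|A-\oA\|\ge\sig_2(A)$ once more yields \refe{sig2leA-D}, i.e. $\sig_2(A)\le 8\sqrt2\sqrt{\log h(A-\oA)+2}\,\|A-\oA\|_\Del$.

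Finally, since $h(A-\oA)\le K$ and the map $t\mapsto\sqrt{\log t+2}$ is monotone increasing on $[1,\infty)$, I would replace $h(A-\oA)$ by $K$ in the last inequality and divide through by $8\sqrt2\sqrt{\log K+2}$ to obtain the claimed bound. There is no genuine obstacle here — all the analytic content sits in Theorem~\reft{Dnorm} and in \refe{EYM} — and the only points needing a moment's care are the rank-$1$ property of $\oA$, the factor-$2$ loss in the $L^1$/$L^\infty$ norms (harmlessly absorbed into $K$), and the observation that $K\ge h(A-\oA)\ge1$, so that $\log K\ge0$ and the monotonicity step is legitimate.
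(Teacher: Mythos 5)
Your proposal is correct and follows exactly the paper's own route: apply the Eckart--Young--Mirsky bound \refe{EYM} with $D=\oA$ (rank at most $1$), bound $h(A-\oA)\le K$ via $\|\oA\|_1\le\|A\|_1$ and $\|\oA\|_\infty\le\|A\|_\infty$, then apply Theorem~\reft{Dnorm} to $A-\oA$ and use monotonicity of $t\mapsto\sqrt{\log t+2}$. Nothing is missing; the verification that $K\ge h(A-\oA)\ge 1$ is the right point to flag, and it holds since $\sig_2(A)\le\|A\|\le\sqrt{\|A\|_1\|A\|_\infty}$.
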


Arguing the same way but using Theorem~2 instead of Theorem~1, we get
\begin{theorem}\label{t:sig2P}
Suppose that $A$ is a complex matrix of rank at least $2$, and let $\oA$ be
the identically-sized matrix all of whose elements are equal to the
arithmetic mean of the elements of $A$. Then, writing
$K:=2\sqrt{\|A\|_1\|A\|_\infty}/\sig_2(A)$, we have
  $$ \|A-\oA\|_P \ge \frac{\sig_2(A)}{32\sqrt 2(\log K+4)}. $$
\end{theorem}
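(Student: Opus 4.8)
The plan is to follow verbatim the short derivation of Theorem~\reft{sig2D} given in the preceding paragraph, the only change being that Theorem~\reft{Pnorm} is invoked in place of Theorem~\reft{Dnorm}. All of the auxiliary facts needed have already been recorded above: the matrix $\oA$ has rank at most $1$ (all of its entries being equal), so \refe{EYM} applies with $D=\oA$ and yields $\|A-\oA\|\ge\sig_2(A)$; and the height of $A-\oA$ is controlled by \refe{hA-D}, which gives precisely $h(A-\oA)\le K$ with $K=2\sqrt{\|A\|_1\|A\|_\infty}/\sig_2(A)$.

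Concretely, I would first observe that $A-\oA$ is a non-zero matrix --- indeed $\|A-\oA\|\ge\sig_2(A)>0$ since $\rk A\ge2$ --- so that Theorem~\reft{Pnorm} may be applied to it, giving
$$ \|A-\oA\|_P \ge \frac{\|A-\oA\|}{32\sqrt2\,\big(\log h(A-\oA)+4\big)}. $$
Then I would substitute the two bounds quoted above: in the numerator $\|A-\oA\|\ge\sig_2(A)$, and in the denominator $\log h(A-\oA)+4\le\log K+4$, using monotonicity of the logarithm together with $h(A-\oA)\le K$. At this step one should check that $\log K+4>0$, so that the inequality is preserved upon dividing by it; this holds because $\sig_2(A)\le\|A\|\le\sqrt{\|A\|_1\|A\|_\infty}$ by \refe{RieszThorin}, whence $K\ge2$. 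Combining, we obtain
$$ \|A-\oA\|_P \ge \frac{\sig_2(A)}{32\sqrt2\,(\log K+4)}, $$
which is the assertion of the theorem.

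There is no real obstacle here: the substantive work was carried out in proving Theorems~\reft{Dnorm} and~\reft{Pnorm} and in the discussion leading to \refe{hA-D}, and the present statement is a mechanical consequence. The only two points deserving a word are the applicability of \refe{EYM} (for which it suffices that $\rk\oA\le1$, immediate since all entries of $\oA$ coincide) and the positivity of $\log K+4$ (needed to divide by it, and verified just above via \refe{RieszThorin}). If one preferred, the whole argument could be presented as a single display chaining Theorem~\reft{Pnorm}, the height bound \refe{hA-D}, and the Eckart--Young--Mirsky inequality \refe{EYM}.
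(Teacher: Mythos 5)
Your proposal is correct and follows exactly the route the paper intends: the paper derives Theorem~\reft{sig2P} by repeating the two-line argument for Theorem~\reft{sig2D} (combining \refe{EYM}, the height bound \refe{hA-D}, and Theorem~\reft{Pnorm} in place of Theorem~\reft{Dnorm}), which is precisely what you do. The extra sanity checks you include (that $\rk\oA\le1$, that $A-\oA\ne0$, and that $\log K+4>0$) are harmless and correct.
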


Specifying Theorems~\reft{sig2D} and~\reft{sig2P} to the case where $A$ is
the adjacency matrix of a graph, we obtain the following corollaries (stated
in terms of the second singular value of a graph which, we recall, is the
second largest among the absolute values of its eigenvalues).
\begin{corollary}\label{c:sig2D}
Let $(V,E)$ be a non-empty graph with the maximum degree $\Del$, average
degree $d$, and the second singular value $\sig$. Then there exists a
non-empty subset $X\seq V$ such that, with $N_X(v)$ as in Corollary
\refc{Dnorm}, we have
  $$ \sum_{v\in V} \left(|N_X(v)|-d\,\frac{|X|}{|V|}\right)^2
                         \ge \frac{\sig^2}{128(\log(2\Del/\sig)+2)}\,|X|. $$
\end{corollary}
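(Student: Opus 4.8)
\emph{Proof proposal.} The plan is to apply Theorem~\reft{sig2D} to the adjacency matrix $A$ of the graph $(V,E)$ and then rewrite the discrete norm $\|A-\oA\|_\Del$ in combinatorial terms. Put $n:=|V|$. If $E=\est$, then $A=0$, hence $\sig=0$, the right-hand side of the claimed inequality vanishes, and any non-empty $X\seq V$ works; so we may assume $E\ne\est$, in which case $\sig>0$ gives $\sig_1(A)\ge\sig_2(A)=\sig>0$, so $\rk A\ge 2$ and Theorem~\reft{sig2D} applies.

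First I would identify the quantities entering the theorem. Since $A$ is a symmetric $\{0,1\}$-matrix, its largest absolute row sum and largest absolute column sum both equal the maximum degree, so $\|A\|_1=\|A\|_\infty=\Del$; by definition $\sig_2(A)=\sig$; hence $K=2\sqrt{\|A\|_1\|A\|_\infty}/\sig_2(A)=2\Del/\sig$. Next I would pin down $\oA$: the entries of $A$ sum to $2|E|=\sum_{v\in V}\deg(v)=dn$, distributed over $n^2$ positions, so $\oA$ is the $n\times n$ matrix all of whose entries equal $d/n$.

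The key step is the combinatorial reading of $\|A-\oA\|_\Del$. Identifying a subset $X\seq V$ with its indicator vector $\xi\in\{0,1\}^n$, we have $\|\xi\|^2=|X|$ and, for every $v\in V$,
$$ \big((A-\oA)\xi\big)_v = \sum_{u\colon uv\in E}\xi_u - \frac dn\sum_u\xi_u = |N_X(v)| - d\,\frac{|X|}{n}, $$
so that $\|(A-\oA)\xi\|^2=\sum_{v\in V}\big(|N_X(v)|-d|X|/|V|\big)^2$. Therefore
$$ \|A-\oA\|_\Del^2 = \max_{\est\ne X\seq V}\ \frac1{|X|}\sum_{v\in V}\Big(|N_X(v)|-d\,\frac{|X|}{|V|}\Big)^2. $$
Squaring the estimate of Theorem~\reft{sig2D}, using $(8\sqrt2)^2=128$ and $K=2\Del/\sig$, we get $\|A-\oA\|_\Del^2\ge\sig^2/\big(128(\log(2\Del/\sig)+2)\big)$; picking $X$ that attains the maximum above (the set of competitors is finite and non-empty) yields the assertion.

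I do not expect a genuine obstacle here — the argument is essentially bookkeeping. The only points that need a moment's care are the passage between the maximum over non-zero binary vectors and the maximum over non-empty vertex subsets, the verification that $\oA$ is indeed the constant matrix with entry $d/n$ (which is where the relation $d=2|E|/n$ is used), and the separate treatment of the degenerate case $E=\est$, to which Theorem~\reft{sig2D} does not literally apply.
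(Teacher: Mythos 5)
Your proposal is correct and follows exactly the route the paper intends: the paper derives Corollary~\refc{sig2D} by specializing Theorem~\reft{sig2D} to the adjacency matrix, with the same identifications $\|A\|_1=\|A\|_\infty=\Del$, $K=2\Del/\sig$, $\oA$ constant with entries $d/|V|$, and the combinatorial reading of $\|(A-\oA)\xi\|^2$. The only nitpick is that the case $E=\est$ is already excluded by the hypothesis that the graph is non-empty (and there the right-hand side is not zero but undefined), so that branch of your argument is unnecessary.
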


Corollary~\refc{sig2D} is a converse to a result of Alon and Spencer
\cite[Theorem~9.2.4]{b:as} asserting that, under the notation of the
corollary, if $(V,E)$ is $d$-regular, then
  $$ \sum_{v\in V} \left(|N_X(v)|-d\,\frac{|X|}{|V|}\right)^2
                            \le \left(1-\frac{|X|}{|V|}\right)\sig^2\,|X| $$
for any $X\seq V$.

It is not difficult to see that the second singular value of a non-empty
graph is at least $1$; thus, the ratio $2\Del/\sig$ in the statement of
Corollary~\refc{sig2D} (and also Corollary~\refc{sig2P} immediately
following) does not exceed $2\Del$.

\begin{corollary}\label{c:sig2P}
Let $(V,E)$ be a non-empty graph with the maximum degree $\Del$, average
degree $d$, and the second singular value $\sig$. Then there exist non-empty
subsets $X,Y\seq V$ such that, with $e(X,Y)$ as in Corollary~\refc{Pnorm}, we
have
  $$ \left| e(X,Y) - d\,\frac{|X||Y|}{|V|} \right| \ge
       \frac{\sig}{32\sqrt2\big(\log(2\Del/\sig)\,+4\big)}\,\sqrt{|X||Y|}. $$
\end{corollary}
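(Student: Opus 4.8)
The plan is to apply Theorem~\reft{sig2P} to the adjacency matrix $A$ of $(V,E)$; the whole argument amounts to translating the conclusion of that theorem into combinatorial language, so I expect no genuine obstacle, only some bookkeeping.

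First I would record the relevant data of $A$. Since $(V,E)$ is non-empty, $A$ is a non-zero symmetric matrix with $\operatorname{tr}A=0$; as a non-zero real symmetric matrix of rank $1$ has the form $\pm vv^t$ for some non-zero real vector $v$, and such a matrix has trace $\pm\|v\|^2\ne 0$, it follows that $\rk A\ge 2$, so that Theorem~\reft{sig2P} applies and $\sig_2(A)=\sig>0$. The entries of $A$ sum to $2|E|=d|V|$, so their arithmetic mean is $d/|V|$; thus $\oA=(d/|V|)J$, where $J$ is the all-ones matrix of order $|V|$. Finally, the largest absolute row sum and the largest absolute column sum of $A$ both equal the maximum degree $\Del$, so $\|A\|_1=\|A\|_\infty=\Del$ and $K:=2\sqrt{\|A\|_1\|A\|_\infty}/\sig_2(A)=2\Del/\sig$.

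Next I would unwind the definition of the discrete Rayleigh norm. Identifying a non-zero vector $\xi\in\{0,1\}^V$ with the non-empty set $X:=\{v\in V\colon\xi_v=1\}$ and likewise $\eta$ with $Y$, we have $\xi^tA\eta=e(X,Y)$, while $\xi^t\oA\eta=(d/|V|)\,(\mathbf 1^t\xi)(\mathbf 1^t\eta)=d|X||Y|/|V|$ and $\|\xi\|\|\eta\|=\sqrt{|X||Y|}$. Consequently
$$ \|A-\oA\|_P = \max_{\est\ne X,Y\seq V}
        \frac{\bigl|\,e(X,Y)-d|X||Y|/|V|\,\bigr|}{\sqrt{|X||Y|}}. $$
Combining this with the estimate of Theorem~\reft{sig2P},
$$ \|A-\oA\|_P \ge \frac{\sig_2(A)}{32\sqrt2(\log K+4)}
                 = \frac{\sig}{32\sqrt2\,\bigl(\log(2\Del/\sig)+4\bigr)}, $$
I conclude that the maximum in the displayed identity is at least the right-hand side here, and any pair $(X,Y)$ realizing it provides the non-empty subsets sought in the statement. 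The only points deserving attention are the bound $\rk A\ge 2$ (needed both for the applicability of Theorem~\reft{sig2P} and for $K$ to be well-defined) and the routine identification of $e(X,Y)$, of the mean of the entries of $A$, and of the Euclidean norms of $0/1$ vectors with the corresponding combinatorial quantities.
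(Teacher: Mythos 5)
Your proposal is correct and follows exactly the route the paper intends: the paper explicitly declares Corollary~\refc{sig2P} a direct consequence of Theorem~\reft{sig2P} obtained by specializing to the adjacency matrix, and your bookkeeping ($\rk A\ge 2$, $\|A\|_1=\|A\|_\infty=\Del$, $K=2\Del/\sig$, $\oA=(d/|V|)J$, and the identification $\xi^tA\eta=e(X,Y)$, $\|\xi\|\|\eta\|=\sqrt{|X||Y|}$) is precisely the required translation.
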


Corollary \refc{sig2P} is a converse to the well-known Expander Mixing Lemma
(see, for instance, \cite[Corollary~9.2.5]{b:as}) which says that if $(V,E)$
is $d$-regular, then
  $$ \left| e(X,Y) - d\,\frac{|X||Y|}{|V|} \right| \le \sig \sqrt{|X||Y|} $$
for all $X,Y\seq V$.

We remark that Theorem~\reft{sig2P} and Corollary~\refc{sig2P} are rather
close to \cite[Theorem~2]{b:bn} and \cite[Corollary~5.1]{b:bili},
respectively. Namely,  \cite[Theorem~2]{b:bn} says that, in our notation, if
$A$ is Hermitian of order $n\ge 2$, then
\begin{equation}\label{e:bnsig2}
  \|A-\oA\|_P\gg\sig_2(A)/\log n,
\end{equation}
while \cite[Corollary~5.1]{b:bili} essentially says that if $(V,E)$ is a
$d$-regular graph with the second singular value $\sig$ satisfying
\begin{equation}\label{e:bilisig2-a}
  \left| e(X,Y) - d\,\frac{|X||Y|}{|V|} \right| \le \alp \sqrt{|X||Y|}
\end{equation}
for all $X,Y\seq V$, with some $0<\alp\le d$, then
\begin{equation}\label{e:bilisig2}
  \alp \gg \sig / \big(\log(d/\alp)\,+1\big).
\end{equation}
It is readily seen that Corollary~\refc{sig2P} implies \refe{bilisig2}: for
if $\alp\le\sig$, then $\log(2d/\sig)\le\log(d/\alp)+1$, whence from
Corollary~\refc{sig2P} and \refe{bilisig2-a},
  $$ \alp \ge \frac{\sig}{32\sqrt2\big(\log(2d/\sig)\,+4\big)}
                                     \gg \frac{\sig}{\log(d/\alp)+1}. $$
As to \refe{bnsig2}, it cannot be formally derived from Theorem~\reft{sig2P},
but follows easily from Theorem~\reft{Pnorm} and the estimates \refe{hlemn}
and \refe{EYM}:
  $$ \|A-\oA\|_P \gg \frac{\|A-\oA\|}{\log h(A-\oA)+1}
                                            \ge \frac{\sig_2(A)}{\log n}. $$

\bigskip
\subsection{Sharpness}
Theorems~\reft{Dnorm} and~\reft{Pnorm} are sharp in the sense that the
logarithmic function in their lower bounds cannot be replaced with any slower
growing function. To see this, for integer $n\ge 4$ consider the vector
$x=(1,1/\sqrt2\longc1/\sqrt n)^t$, and let $A=xx^t$; thus, $A$ is a symmetric
real matrix of order $n$ with the entries $1/\sqrt{ij}\ (i,j\in[1,n])$. It is
readily verified that $\|A\|_1=\|A\|_\infty<2\sqrt n$ and $Az=\<x,z\>x$,
whence $\|A\|=\|x\|^2>\log n$ and therefore $h(A)<2\sqrt n/\log n$.
Consequently, for every non-zero vector $\xi\in\{0,1\}^n$, writing
$k:=\|\xi\|^2$, we have
\begin{multline*}
  \|A\xi\| =   \<x,\xi\> \|x\|
           \le \left( 1+\frac1{\sqrt 2} \longp \frac1{\sqrt k} \right)
                                                      \frac1{\|x\|}\|A\| \\
           <   \frac{2}{\sqrt{\log n}}\,\|A\|\sqrt{k}
           < \frac{2}{\sqrt{\log h(A)}}\,\|A\|\|\xi\|,
\end{multline*}
implying
  $$ \|A\|_\Del < \frac2{\sqrt{\log h(A)}}\,\|A\|. $$
Similarly, for all non-zero $\xi,\eta\in\{0,1\}^n$, writing $k:=\|\xi\|^2$
and $l:=\|\eta\|^2$, we have
  $$ |\xi^tA\eta|=\<\xi,x\>\<\eta,x\> < 2\sqrt{k}\cdot 2\sqrt{l}
               < \frac4{\log n} \|A\|\|\xi\|\|\eta\|
                                  < \frac4{\log h(A)} \|A\|\|\xi\|\|\eta\| $$
whence
  $$ \|A\|_P < \frac4{\log h(A)}\,\|A\|. $$

Furthermore, Bollob\'as and Nikiforov \cite[Section~3]{b:bn} construct
regular graphs $(V,E)$ of arbitrarily large even order $n:=|V|$ and degree
$n/2$ such that, denoting by $A$ the adjacency matrix of $(V,E)$, and by
$\oA$ the square matrix of order $n$ with all elements equal to $1/2$ (which
is the average of the elements of $A$), one has $\|A-\oA\|_P\ll\sig_2(A)/\log
n$; this shows that the logarithmic factors in Theorem~\reft{sig2P} and
Corollary~\refc{sig2P} cannot be replaced with sub-logarithmic ones. Another
example of this sort is given by Bilu and Linial \cite[Theorem~5.1]{b:bili}.
Although we have not checked carefully the details, we believe that the
constructions of Bollob\'as-Nikiforov and Bilu-Linial can also be used to
show that Theorem~\reft{sig2D} and Corollary~\refc{sig2D} are tight.

An interesting question not addressed by these observations is whether
Corollaries~\refc{Dnorm} and~\refc{Pnorm} are sharp; that is, whether one can
improve Theorems~\reft{Dnorm} and~\reft{Pnorm} under the extra assumption
that the matrix $A$ under consideration is zero-one and symmetric. Notice
that if $A$ corresponds to a \emph{regular} graph, then the norm $\|A\|$ is
equal to the degree of the graph, and taking the vectors $\xi$ and $\eta$ in
the definitions of discrete norms to be the all-$1$ vectors, we see that in
this case $\|A\|=\|A\|_\Del=\|A\|_P$. Consequently, any example showing that
the logarithmic factors in Corollaries~\refc{Dnorm} and~\refc{Pnorm} cannot
be dropped should involve highly non-regular graphs. In this direction we
prove the following result, giving at least a partial solution to the
problem.

\begin{theorem}\label{t:Kn}
For integer $m\ge 1$, let $\Gam_m$ be the graph on the set $\{0,1\}^m$ of all
binary vectors of length $m$, with two vectors adjacent if and only if they
have disjoint supports. Then, denoting by $A_m$ the adjacency matrix of
$\Gam_m$, we have $\|A_m\|_\Del\ll\|A_m\|/\sqrt[4]{m}$ and
$\|A_m\|_P\ll\|A_m\|/\sqrt{m}$, with absolute implicit constants.
\end{theorem}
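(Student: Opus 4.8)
The plan is to use the tensor structure of $A_m$. Identifying $u\in\{0,1\}^m$ with its support, two vectors are adjacent precisely when $(u_i,v_i)\ne(1,1)$ for all $i$, so $(A_m)_{u,v}=\prod_{i}(A_1)_{u_iv_i}$, i.e. $A_m=A_1^{\otimes m}$, where $A_1$ is the symmetric $2\times2$ matrix with $(A_1)_{00}=(A_1)_{01}=(A_1)_{10}=1$, $(A_1)_{11}=0$ (this reads off the adjacency, the all-zero vector being adjacent to itself). Thus $\|A_m\|=\|A_1\|^m=\phi^m$ with $\phi=(1+\sqrt5)/2$; in fact $A_m$ is symmetric with eigenvalues $(-1)^j\phi^{m-2j}$ of multiplicity $\binom mj$ ($0\le j\le m$), whence $|A_m|:=(A_m^2)^{1/2}=|A_1|^{\otimes m}$ with $|A_1|=5^{-1/2}\bigl(\begin{smallmatrix}3&1\\1&2\end{smallmatrix}\bigr)$ and $\||A_m|\|=\phi^m$. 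For $\xi$ the indicator of a family $\mathcal F\seq 2^{[m]}$ I would also record $\|A_m\xi\|^2=\langle A_m^2\xi,\xi\rangle=\sum_{S,T\in\mathcal F}2^{m-|S\cup T|}$ and $\langle|A_m|\xi,\xi\rangle=5^{-m/2}\sum_{S,T\in\mathcal F}3^{m-|S\cup T|}2^{|S\cap T|}$.

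Second, reduce both bounds to a single inequality. Put $M:=\max\bigl\{\langle|A_m|\xi,\xi\rangle/\|\xi\|^2:\ 0\ne\xi\in\{0,1\}^{2^m}\bigr\}$. Since $|A_m|$ is positive semidefinite, $\|Bx\|^2\le\|B\|\langle Bx,x\rangle$ for $B=|A_m|$, and $\|A_m\xi\|=\||A_m|\xi\|$; hence $\|A_m\|_\Del^2\le\phi^mM$. Likewise, from the polar decomposition $A_m=U|A_m|$ (with $U$ orthogonal, and commuting with $|A_m|$ as $A_m$ is symmetric) and Cauchy--Schwarz, $|\xi^tA_m\eta|=\bigl|\langle|A_m|^{1/2}\eta,|A_m|^{1/2}U^t\xi\rangle\bigr|\le\langle|A_m|\xi,\xi\rangle^{1/2}\langle|A_m|\eta,\eta\rangle^{1/2}$, so $\|A_m\|_P\le M$. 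It therefore suffices to prove $M\ll\phi^m/\sqrt m$. The trivial estimate $\langle|A_m|\xi,\xi\rangle\le\||A_m|\|\,\|\xi\|^2=\phi^m\|\xi\|^2$ gives only $M\le\phi^m$, so the whole point is to save a factor $\sqrt m$ for $0$--$1$ vectors.

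Third, prove $M\ll\phi^m/\sqrt m$. Diagonalizing, $\langle|A_m|\xi,\xi\rangle=\phi^m\sum_{j=0}^m\phi^{-2j}\|P_j\xi\|^2$ where $P_j$ is the rank-$\binom mj$ spectral projector; since the weights $\phi^{-2j}$ decay geometrically, it is enough to show that a $0$--$1$ vector cannot put much $\ell^2$-mass on the top eigenspaces, say $\|P_j\xi\|^2\ll (1+j)^{O(1)}\|\xi\|^2/\sqrt m$ for $j\le C\log m$ (for $j>C\log m$ one has $\phi^{-2j}\le m^{-1/2}$ while $\sum_j\|P_j\xi\|^2=\|\xi\|^2$). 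Intuitively this holds because the top eigenvector $(\phi,1)^{\otimes m}$, and likewise each low eigenvector, is spread over $\Theta(\sqrt m)$ Hamming levels, so correlates with any $0$--$1$ vector to within $O(\|\xi\|^2/\sqrt m)$. I would make this precise by a compression/symmetrization argument: the ratio $\langle|A_m|\xi,\xi\rangle/\|\xi\|^2$ is not decreased when $\mathcal F$ is replaced by an initial segment in a suitable colexicographic order, so one may assume $\mathcal F$ is a Hamming ball $\binom{[m]}{\le k}$, and by averaging even a single layer $\binom{[m]}{k}$. For a layer one has the closed form $\langle|A_m|\xi,\xi\rangle/\|\xi\|^2=5^{-m/2}3^{m-2k}\,[y^k](1+6y)^k(1+y)^{m-k}$ (with $[y^k]$ the coefficient of $y^k$), which Stirling's formula / the saddle point evaluates as $\Theta(\phi^m/\sqrt m)$: the exponential rate is exactly $\log\phi$, attained at $k=m/(1+\phi^2)$ (one checks the saddle is $y=1/(3\phi)$, where $1+6y=\sqrt5$ and all logarithms combine to $\log\phi$), and the $1/\sqrt m$ comes from the Gaussian width of the saddle. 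Hence $M\ll\phi^m/\sqrt m$, and finally $\|A_m\|_\Del\le(\phi^mM)^{1/2}\ll\phi^m/\sqrt[4]m$ and $\|A_m\|_P\le M\ll\phi^m/\sqrt m$.

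The main obstacle is the reduction to a Hamming ball in the last step: compression for a quadratic form whose kernel $3^{m-|S\cup T|}2^{|S\cap T|}=3^{m-|S|-|T|}6^{|S\cap T|}$ is neither a single tensor factor nor a positive-definite kernel on the cube is not automatic, and one must verify that each elementary compression step leaves $\|\xi\|^2$ fixed and does not decrease $\langle|A_m|\xi,\xi\rangle$. An alternative that avoids compression is to establish the level-$j$ mass bounds $\|P_j\xi\|^2\ll(1+j)^{O(1)}\|\xi\|^2/\sqrt m$ directly from the explicit formula $\langle\xi,\Phi_S\rangle=(1+\phi^2)^{-m/2}\sum_{T\in\mathcal F}(-1)^{|S\cap T|}\phi^{m-|S\triangle T|}$ for the coordinates of $\xi$ in the tensor eigenbasis $\{\Phi_S\}$; but the requisite ``level-$j$ inequality'' is itself delicate, the per-coordinate eigenvector $(\phi,1)$ not being constant, so the standard hypercontractive estimates do not apply verbatim.
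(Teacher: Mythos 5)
Your reductions are correct and genuinely different in spirit from the paper's: writing $A_m=A_1^{\otimes m}$, passing to the positive square root $|A_m|=|A_1|^{\otimes m}$, and funnelling both norms through the single quantity $M=\max\<|A_m|\xi,\xi\>/\|\xi\|^2$ is a clean idea, and the inequalities $\|A_m\|_\Del^2\le\phi^m M$ and $\|A_m\|_P\le M$ are valid. The final saddle-point evaluation for a single Hamming layer is also essentially the same computation as the paper performs (same critical point $x_0=y_0=(5-\sqrt5)/10$, same exponential rate $2\ln\phi$, same Gaussian-width source of the $1/\sqrt m$).

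The gap is exactly where you locate it, and it is not a technicality: the reduction of $M$ to Hamming balls/layers carries the entire content of the theorem in your approach, and nothing you cite proves it. The compression theorem that is actually available (Bollob\'as--Leader, ``Set systems with few disjoint pairs'') says that for each fixed $k$ the count $|\{(S,T)\in\mathcal F\times\mathcal F:\ |S\cup T|\le k\}|$ is maximized by a Hamming ball; hence any kernel that is a non-increasing function of $|S\cup T|$ \emph{alone} is handled by partial summation. Your kernel $3^{m-|S\cup T|}2^{|S\cap T|}$ depends on the two statistics $|S\cup T|$ and $|S\cap T|$ separately and is increasing in $|S\cap T|$ for fixed union, so it is not a positive combination of those counting functions, and no elementary compression step is known to be monotone for it. The fallback route via level-$j$ mass bounds $\|P_j\xi\|^2\ll(1+j)^{O(1)}\|\xi\|^2/\sqrt m$ is likewise only conjectured in your write-up. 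The paper avoids both difficulties: for the $\Del$-norm it works with $\<A_m^2\xi,\xi\>=\sum_{S,T}2^{m-|S\cup T|}$, whose kernel \emph{is} a decreasing function of $|S\cup T|$ alone, so Bollob\'as--Leader applies verbatim (after trivial size reductions and a weighted Cauchy--Schwarz to pass from a ball to a single layer); and for the $P$-norm it abandons spectral machinery altogether, deleting low-degree vertices and bounding $e(X,Y)$ via a vertex of maximal support in each of $X$ and $Y$, which needs only the binomial-sum estimates. To complete your argument you would either have to prove the compression/level inequality for the $|A_m|$-form --- a real piece of new work --- or retreat to the paper's two separate arguments, at which point the $M$-framework is no longer doing anything.
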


The graph of Theorem~\reft{Kn} is similar to the well-known Kneser graphs;
however, unlike the ``standard'' Kneser graphs, the vertex set of our graph
is not restricted to vectors of fixed weight. The graph is simple, except for
the loop attached to the zero vector; clearly, removing this loop will not
affect significantly any of the norms in question.

\bigskip
We now turn to the proofs. Theorems~\reft{Dnorm} and~\reft{Pnorm} are proved
in the next section; as we have explained above, Theorems~\reft{sig2D}
and~\reft{sig2P}, as well as Corollaries~\refc{Dnorm}--\refc{sig2P}, are
their direct consequences, and will not be addressed any more.
Theorem~\reft{Kn} is proved in Section~\refs{Kn}.

\section{Proofs of Theorems~\reft{Dnorm} and~\reft{Pnorm}}\label{s:DPnorms}

Both proofs share the same toolbox: Lemma~\refl{tt} showing that for any
complex matrix $A$, there exists a vector $z$ with $\|Az\|/\|z\|$ close to
$\|A\|$ and the ratios of its non-zero coordinates bounded in terms of the
height $h(A)$, and Lemmas~\refl{ell-nonneg}--\refl{ell-complex} showing that
a low-height vector cannot be approximately orthogonal to all binary vectors
simultaneously.

For a non-zero vector $z=(z_1\longc z_n)\in\C^n$, we define the
\emph{logarithmic diameter} of $z$ by
  $$ \ell(z) := \frac{\max\{|z_i|\colon i\in[n]\}}%
                                 {\min\{|z_i|\colon i\in[n],\ z_i\ne 0\}}. $$

\begin{lemma}\label{l:tt}
Let $n\ge 1$ be an integer and $K\ge 1$ a real number. For any non-zero
complex matrix $A$ with $n$ columns of height $h(A)\le K$, there exists a
vector $z\in\C^n$ such that $\|Az\|>\frac12\|A\|\|z\|$ and $\ell(z)<8K^2+1$.
\end{lemma}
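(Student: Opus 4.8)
The plan is to take a vector $x$ nearly attaining $\|A\|$, and truncate away both its very small coordinates (which contribute little to $Ax$) and its very large coordinates (which cannot be too numerous, since $\|A\|_1$ and $\|A\|_\infty$ are controlled by $K\|A\|$), leaving a vector $z$ whose non-zero coordinates all lie in a bounded multiplicative window. First I would fix a unit vector $x=(x_1,\dots,x_n)$ with $\|Ax\|>\big(1-\tfrac1{100}\big)\|A\|$, say, and set $M:=\max_i|x_i|$. The idea is to keep only those coordinates $i$ with $|x_i|$ in a range $[\theta M, M]$ for a suitable threshold $\theta$ depending on $K$, zeroing out the rest; call the resulting vector $z$. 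By construction $\ell(z)\le 1/\theta$, so it remains to choose $\theta\asymp K^{-2}$ and show $\|Az\|>\tfrac12\|A\|\|z\|\le\tfrac12\|A\|$.

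For the lower bound on $\|Az\|$ I would write $x=z+w$, where $w$ collects the discarded coordinates, and estimate $\|Aw\|$. Split $w=w_{\mathrm{small}}+w_{\mathrm{large}}$ according to $|x_i|<\theta M$ versus $|x_i|=M$ forced out by... actually only small coordinates are discarded if we keep the top window $[\theta M, M]$, so $w=w_{\mathrm{small}}$ with every entry of absolute value $<\theta M$. Then $\|w\|^2<n\theta^2M^2$, which is not directly useful; instead I would bound $\|Aw\|$ via $\|Aw\|\le\|A\|_\infty^{1/2}\|A\|_1^{1/2}$-type reasoning is also not quite it. The cleaner route: use $\|Aw\|\le \|A\|\,\|w\|$ is circular, so bound $\|Aw\|_2$ using $\|Aw\|_2\le \|Aw\|_1^{1/2}\|Aw\|_\infty^{1/2}$? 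The honest approach is: since each entry of $w$ has modulus $<\theta M$ and $\sum_i |x_i|^2 = 1$ gives $M\ge 1/\sqrt n$ but also the number of small coordinates is at most $n$, we get $\|w\|_1 < n\theta M$ and $\|w\|_\infty<\theta M$; hence $\|Aw\|\le\|A\|_\infty\|w\|_\infty^{?}$...

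The right tool is: $\|Aw\| \le \|A w\|$, and we use that $\|A w\|^2 \le \|A\|_1\|A\|_\infty\|w\|^2/\|w\|^2$ — no. Let me state it correctly. We have the elementary bound $\|Aw\|_2 \le \|A\|_2\|w\|_2$, useless; but also $\|Aw\|_2 \le \sqrt{\|A\|_1\|A\|_\infty}\cdot\|w\|_?$. The actual argument in such proofs: bound the contribution of small coordinates by $\|A(w)\|\le \|A\|_1^{1/2}\|A\|_\infty^{1/2}\|w\|_\infty^{1/2}\|w\|_1^{1/2}$ via the mixed-norm estimate $\|Aw\|_2^2 = \langle A^*Aw,w\rangle \le \|A^*Aw\|_\infty\|w\|_1 \le \|A^*\|_\infty\|A\|_\infty^{?}$ — I will not grind this here, but the point is that $\|w\|_1\|w\|_\infty < n\theta^2 M^2$ while $\|z\|\ge\|x\|-\|w\|_2$ and one shows $M^2 n$ is controlled because $\|Ax\|\ge(1-\epsilon)\|A\|$ forces $\sum|x_i|$ not too small relative to... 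Actually the genuinely clean fact is $\|A\|_1\ge \|Ax\|_1/\|x\|_1\ge \|Ax\|_2/\|x\|_1$, so $\|x\|_1\ge \|Ax\|_2/\|A\|_1 \ge (1-\epsilon)\|A\|/\|A\|_1$; combined with $\|x\|_1 \le \sqrt{(\#\text{supp})}\le\sqrt n$ and $h(A)\le K$ this pins down the scale. I expect \textbf{this mixed-norm bookkeeping — showing the discarded mass $\|Aw\|$ is at most, say, $\tfrac{1}{4}\|A\|$ once $\theta\asymp K^{-2}$ — to be the main obstacle}, and the factor $8K^2+1$ in the statement tells me the threshold is $\theta = 1/(8K^2)$ (or nearby) with the small-coordinate count absorbed through $h(A)\le K$ and $\|x\|_1\ge\|A\|/\|A\|_1$. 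Once $\|Aw\|\le c\|A\|$ with $c$ small is established, $\|Az\|\ge\|Ax\|-\|Aw\|\ge(1-\epsilon-c)\|A\|>\tfrac12\|A\|\ge\tfrac12\|A\|\|z\|$ since $\|z\|\le\|x\|=1$, completing the proof.
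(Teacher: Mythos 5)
There is a genuine gap, and it sits exactly where you yourself flagged the ``main obstacle'': the mixed-norm bookkeeping cannot be made to work for the truncation you chose. You keep only the single top window $[\theta M,M]$ of coordinates and hope to show that the discarded vector $w$ (all of whose entries have modulus $<\theta M$) satisfies $\|Aw\|\le c\|A\|$ with $c$ small. But the coordinates of the extremal $x$ may decay slowly across many scales (think $|x_i|\asymp i^{-1/2}$, suitably normalized), in which case the top window captures only $O_K(1)$ coordinates and a vanishing fraction of the $\ell^2$ mass: then $\|w\|$ is close to $1$, $\|z\|$ is close to $0$, and nothing prevents $Aw$ from carrying essentially all of $Ax$. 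The estimate you are reaching for, $\|Aw\|^2=\langle A^*Aw,w\rangle\le\|A\|_1\|A\|_\infty\|w\|_\infty\|w\|_1\le K^2\|A\|^2\|w\|_\infty\|w\|_1$, is never useful here, because $\|w\|_\infty\|w\|_1\ge\|w\|^2$ always; so it can never beat the trivial bound $K\|A\|\|w\|$, and $\|w\|$ itself need not be small. No choice of $\theta$ depending only on $K$ repairs this, and the final inequality chain of your last sentence therefore has no support.

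The missing idea is to not commit to the top window. The paper's proof decomposes $x=\sum_k x^{(k)}$ into \emph{all} multiplicative shells $M^k\le|x_i|<M^{k+1}$ with $M=8K^2+1$ (each shell automatically has $\ell(x^{(k)})<M$), expands $\|A\|^2=|\langle Ax,Ax\rangle|\le\sum_{k,l}|\langle Ax^{(k)},Ax^{(l)}\rangle|$, and applies the mixed-norm bound $|\langle Au,Av\rangle|\le K^2\|A\|^2\|u\|_\infty\|v\|_1$ only to pairs of shells at least two levels apart, where the geometric separation of scales makes the total contribution of all such pairs at most $\frac14\|A\|^2$. If every shell failed the desired conclusion, i.e. $\|Ax^{(k)}\|\le\frac12\|A\|\|x^{(k)}\|$ for all $k$, then Cauchy--Schwarz would cap the near-diagonal contribution by $\frac34\|A\|^2$ (strictly, for the $d=\pm1$ diagonals), giving a total strictly below $\|A\|^2$ --- a contradiction. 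So some shell works. Your instincts about the threshold scale $\theta\asymp K^{-2}$ and the role of the $L^1$/$L^\infty$ norms were right; what is missing is the pigeonhole-by-contradiction over all shells, which replaces the (false) claim that the discarded mass is small.
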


\begin{proof}
Fix a unit-length vector $x=(x_1\longc x_n)^t\in\C^n$ with $\|Ax\|=\|A\|$ and
let $M:=8K^2+1$. Consider the decomposition
  $$ x = \sum_{k=-\infty}^\infty x^{(k)}, $$
where for every integer $k$, the vector $x^{(k)}=(x^{(k)}_1\longc
x^{(k)}_n)^t$ is defined by
  $$ x^{(k)}_i := \begin{cases}
       x_i &\text{ if}\ M^k\le |x_i|<M^{k+1}, \\
       0   &\text{ otherwise}.
                  \end{cases} $$
Notice, that $\ell(x^{(k)})<M$ whenever $x^{(k)}\ne 0$. We have
\begin{equation}\label{e:Axx-large}
  \|A\|^2 = |\<Ax,Ax\>| \le \sum_{k,l=-\infty}^\infty |\<Ax^{(k)},Ax^{(l)}\>|
\end{equation}
and, since the vectors $x^{(k)}$ are pairwise orthogonal,
  $$ \sum_{k=-\infty}^\infty \|x^{(k)}\|^2 = \|x\|^2 = 1. $$
Since $h(A)\le K$ implies
  $$ |\<Au,Av\>| \le \|Au\|_\infty\|Av\|_1
             \le (\|A\|_\infty\|A\|_1) \|u\|_\infty\|v\|_1
                                         \le K^2\|A\|^2\|u\|_\infty\|v\|_1 $$
for all $u,v\in\C^n$, the contribution to the right-hand side of
\refe{Axx-large} of the summands with $l\ge k+2$ can be estimated as follows:
\begin{align*}
  \sum_{k,l\colon l\ge k+2} |\<Ax^{(k)},Ax^{(l)}\>|
    &\le K^2\|A\|^2 \sum_{k,l\colon l\ge k+2} M^{k+1}
                           \sum_{i\in[n]\colon M^l\le |x_i|<M^{l+1}} |x_i| \\
    &= K^2\|A\|^2 \sum_{l=-\infty}^\infty \sum_{k=-\infty}^{l-2} M^{k+1}
                           \sum_{i\in[n]\colon M^l\le |x_i|<M^{l+1}} |x_i| \\
    &\le \frac{K^2}{M-1}\,\|A\|^2\, \sum_{l=-\infty}^\infty\ %
                         \sum_{i\in[n]\colon M^l\le |x_i|<M^{l+1}} |x_i|^2 \\
    &=   \frac18\,\|A\|^2\|x\|^2 \\
    &=   \frac18\,\|A\|^2.
\end{align*}
By symmetry,
\begin{equation}\label{e:off-diag}
  \sum_{k,l\colon|k-l|\ge 2} |\<Ax^{(k)},Ax^{(l)}\>| \le \frac14\,\|A\|^2.
\end{equation}

Assuming that the assertion of the lemma fails to hold, we have
$\|Ax^{(k)}\|\le\frac12\,\|A\|\|x^{(k)}\|$ for every integer $k$. Hence,
under this assumption, for any fixed integer $d$,
\begin{align*}
  \sum_{k,l\colon k-l=d} |\<Ax^{(k)},Ax^{(l)}\>|
    &\le \frac14\,\|A\|^2 \sum_{l=-\infty}^\infty \|x^{(l)}\|\|x^{(l+d)}\|\\
    &\le   \frac14\,\|A\|^2\sum_{l=-\infty}^\infty \|x^{(l)}\|^2 \\
    &= \frac14\,\|A\|^2,
\end{align*}
the second inequality being strict unless $d=0$. It follows that
  $$ \sum_{k,l\colon|k-l|\le 1} |\<Ax^{(k)},Ax^{(l)}\>| < \frac34\,\|A\|^2; $$
along with \refe{off-diag} this yields
  $$ \sum_{k,l=-\infty}^\infty |\<Ax^{(k)},Ax^{(l)}\>| < \|A\|^2, $$
contradicting \refe{Axx-large}.
\end{proof}

For non-zero vectors $u,v\in\C^n$, we write $\cos(u,v):=\<u,v\>/\|u\|\|v\|$.
\begin{lemma}\label{l:ell-nonneg}
Let $n\ge 1$ be an integer and $K\ge 1$ a real number. If $z\in\R^n$ is a
vector with non-negative coordinates and logarithmic diameter $\ell(z)\le K$,
then there exists a binary vector $\xi\in\{0,1\}^n$ such that $\cos(z,\xi)\ge
1/\sqrt{\log K+1}$.
\end{lemma}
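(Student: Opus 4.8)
The plan is to choose the required $\xi$ from among the \emph{threshold} $0/1$-vectors $\xi^{(t)}:=\mathbf 1_{\{i\colon z_i\ge t\}}$, $t>0$, pinning down a good value of $t$ by a one-variable estimate built on the layer-cake identity that expresses $\|z\|^2$ through the numbers $\<z,\xi^{(t)}\>$. Since rescaling $z$ changes neither $\ell(z)$ nor $\cos(z,\xi)$, I would first normalize so that the smallest nonzero coordinate of $z$ equals $1$; all nonzero coordinates then lie in $[1,M]$, where $M:=\max_i z_i$ satisfies $M=\ell(z)\le K$. If $M=1$ the vector $\xi:=\mathbf 1_{\operatorname{supp}(z)}$ gives $\cos(z,\xi)=1$, so I may assume $M>1$.

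For $t\in(0,M]$ put $N(t):=|\{i\colon z_i\ge t\}|=\|\xi^{(t)}\|^2>0$ and $f(t):=\<z,\xi^{(t)}\>=\sum_{i\colon z_i\ge t}z_i$, so that $\cos(z,\xi^{(t)})=\frac{f(t)}{\|z\|\sqrt{N(t)}}$. Three elementary facts will be used: the identity $\int_0^M f(t)\,dt=\sum_i z_i\min(z_i,M)=\|z\|^2$ (valid since $0\le z_i\le M$); the bound $f(t)\ge tN(t)$, as each of the $N(t)$ summands in $f(t)$ is at least $t$; and, writing $s:=|\operatorname{supp}(z)|$, the equality $N(t)=s$ for all $t\in(0,1]$, since every nonzero coordinate is $\ge1$.

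Assume, for contradiction, that $\cos(z,\xi^{(t)})<1/\sqrt L$ for every $t\in(0,M]$, where $L:=\log K+1$; equivalently $f(t)^2<\|z\|^2N(t)/L$ throughout. With $f(t)\ge tN(t)$ this gives $t^2N(t)^2\le f(t)^2<\|z\|^2N(t)/L$, hence
$$ N(t)<\frac{\|z\|^2}{L\,t^2}\qquad(t\in(0,M]); $$
taking $t=1$ gives $s<\|z\|^2/L$, i.e.\ $\|z\|>\sqrt L\,\sqrt s$. Next, substituting $f(t)<\|z\|\sqrt{N(t)}/\sqrt L$ into the layer-cake identity and splitting at $t=1$ (using $N\equiv s$ on $(0,1]$ and the displayed bound on $(1,M]$),
$$ \|z\|^2=\int_0^M f(t)\,dt<\frac{\|z\|}{\sqrt L}\int_0^M\sqrt{N(t)}\,dt
   <\frac{\|z\|}{\sqrt L}\Bigl(\sqrt s+\frac{\|z\|}{\sqrt L}\int_1^M\frac{dt}{t}\Bigr)
   =\frac{\|z\|\sqrt s}{\sqrt L}+\frac{\|z\|^2\log M}{L}. $$
Since $\log M\le\log K<L$, the coefficient $1-\log M/L$ is positive, and this rearranges to $\|z\|\,(L-\log M)<\sqrt L\,\sqrt s$; combined with $\|z\|>\sqrt L\,\sqrt s$ this yields $\sqrt L\,\sqrt s\,(L-\log M)<\sqrt L\,\sqrt s$, that is $L-\log M<1$. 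But $L-\log M=\log K+1-\log M\ge1$ because $M\le K$ — a contradiction. Therefore some $t\in(0,M]$ satisfies $\cos(z,\xi^{(t)})\ge1/\sqrt{\log K+1}$, and $\xi:=\xi^{(t)}$ is as required.

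The delicate point, and the one I would worry about, is the exact constant $1$. A cruder ``dyadic bucketing'' — taking $\xi$ to be the indicator of a single layer $\{i\colon 2^j\le z_i<2^{j+1}\}$ maximizing $4^j\,|\{i\colon 2^j\le z_i<2^{j+1}\}|$ — only yields $\cos(z,\xi)\ge c/\sqrt{\log K+1}$ with $c<1$, because estimating $\|z\|^2$ layer by layer squares the within-layer ratio $2$. The route above avoids this because the layer-cake identity $\int f\,dt=\|z\|^2$ is exact; the auxiliary inequality $\|z\|^2>L\,s$, read off from the single threshold $t=1$, is precisely what absorbs the otherwise-lost factor and produces the clean constant.
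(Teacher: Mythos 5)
Your proof is correct, and it is essentially the paper's own argument: both proofs pick $\xi$ among the superlevel-set indicators $\xi^{(t)}$, combine a layer-cake identity for $\|z\|^2$ with the pointwise bound $\langle z,\xi^{(t)}\rangle\ge tN(t)$ under the contradiction hypothesis, integrate over the threshold $t$, and use the single threshold $t=1$ to absorb the ``$+1$'' and obtain the clean constant. The only differences are in bookkeeping: you integrate $f(t)=\langle z,\xi^{(t)}\rangle$ over $[0,M]$ and split at $t=1$, whereas the paper integrates over $[1,K]$ and accounts for the low range through $\|z\|_1$.
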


\begin{proof}
Passing to the appropriate coordinate subspace and scaling the vector $z$, we
assume that all its coordinates are between $1$ and $K$. For $t\ge 0$, denote
by $\Phi(t)$ the number of those coordinates which are greater than or equal
to $t$, and let $\xi_t\in\{0,1\}^n$ be the characteristic vector of this set
of coordinates; thus $\|\xi_t\|^2=\Phi(t)$. Also, straightforward
verification shows that
\begin{align}
  \int_t^K \Phi(\tau)\,d\tau &= \<z,\xi_t\> - t\Phi(t),
                                          \quad t\in[0,K] \label{e:int1} \\
\intertext{and}
  \int_1^K 2\tau\Phi(\tau)\,d\tau &= \|z\|^2-n. \label{e:int2}
\end{align}
Let $\kap:=1/\sqrt{\log K+1}$. From \refe{int1} we get
 $\<z,\xi_t\>\ge t\Phi(t)=t\|\xi_t\|^2$; consequently, if the
assertion of the lemma were wrong, for each $t>0$ we would have
  $$ \<z,\xi_t\>^2 \le \kap^2\|z\|^2\|\xi_t\|^2
                              \le \kap^2\|z\|^2\cdot\frac1t\<z,\xi_t\>; $$
hence
  $$ \<z,\xi_t\> \le \frac1t\,\kap^2\|z\|^2,\quad t>0. $$
Substituting this estimate into \refe{int1}, integrating over $t$ in the
range $[1,K]$, using \refe{int1} and \refe{int2}, and taking into account
that $\Phi(1)=n$ and $\<\xi_1,z\>=\|z\|_1$, we obtain
\begin{align}
  \kap^2\|z\|^2\log K
    &\ge \int_1^K \left( \int_t^K\Phi(\tau)\,d\tau\right) \,dt
                                        + \int_1^K t\Phi(t) \,dt \notag \\
    &=   \int_1^K (\tau-1)\Phi(\tau)\,d\tau  + \int_1^K t\Phi(t) \,dt \notag \\
    &=   \int_1^K 2\tau\Phi(\tau)\,d\tau - \int_1^K\Phi(\tau)\,d\tau \notag \\
    &=   \|z\|^2-\|z\|_1. \label{e:logK}
\end{align}
From the assumption that the assertion of lemma in wrong we get
  $$ \<z,\xi_1\> < \kap\|z\|\|\xi_1\| $$
(for otherwise the assertion would hold true with $\xi=\xi_1$). As a result,
  $$ \|z\|_1^2 = \<z,\xi_1\>^2 < \kap^2\|z\|^2\|\xi_1\|^2
                               = \kap^2\|z\|^2 n \le \kap^2\|z\|^2\|z\|_1, $$
whence
  $$ \|z\|_1 < \kap^2\|z\|^2. $$
Substituting  into \refe{logK} we get
  $$ \kap^2\|z\|^2\log K > \|z\|^2 -\kap^2\|z\|^2, $$
in a contradiction with our choice of $\kap$.
\end{proof}

Lemma \refl{ell-nonneg} is easy to extend onto \emph{arbitrary} real vectors
(which may have some of their coordinates negative).
\begin{lemma}\label{l:ell-arb}
Let $n\ge 1$ be an integer and $K\ge 1$ a real number. If $z\in\R^n$ is a
vector with the logarithmic diameter $\ell(z)\le K$, then there exists
$\xi\in\{0,1\}^n$ such that
 $|\cos(z,\xi)|\ge 1/\sqrt{2(\log K+1)}$.
\end{lemma}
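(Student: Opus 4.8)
The plan is to split $z$ according to the signs of its coordinates and to apply Lemma~\refl{ell-nonneg} to whichever part carries the larger share of the squared Euclidean norm. Put $P:=\{i\in[n]\colon z_i>0\}$ and $N:=\{i\in[n]\colon z_i<0\}$, let $z_P\in\R^{|P|}$ be the vector formed by the coordinates of $z$ indexed by $P$, and let $z_N\in\R^{|N|}$ be the vector formed by the absolute values of the coordinates indexed by $N$; then $z_P$ and $z_N$ have positive entries, and $\|z_P\|^2+\|z_N\|^2=\|z\|^2$. Hence one of $\|z_P\|^2,\|z_N\|^2$ is at least $\frac12\|z\|^2$, and since $\ell(-z)=\ell(z)$ while $|\cos(-z,\xi)|=|\cos(z,\xi)|$, after possibly replacing $z$ by $-z$ we may assume $\|z_P\|^2\ge\frac12\|z\|^2$; in particular $z_P\ne 0$.

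Next I would check that $z_P$ meets the hypothesis of Lemma~\refl{ell-nonneg}. Every entry of $z_P$ equals $|z_i|$ for some $i$ with $z_i\ne 0$, so $\max_i(z_P)_i\le\max_i|z_i|$ and $\min_i(z_P)_i\ge\min\{|z_i|\colon z_i\ne 0\}$; therefore $\ell(z_P)\le\ell(z)\le K$. Lemma~\refl{ell-nonneg}, applied to $z_P$, then produces a binary vector $\xi'\in\{0,1\}^{|P|}$ with $\cos(z_P,\xi')\ge 1/\sqrt{\log K+1}$.

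Finally I would lift $\xi'$ to the vector $\xi\in\{0,1\}^n$ that equals $\xi'$ on the coordinates in $P$ and is zero elsewhere. Then $\|\xi\|=\|\xi'\|$, and because $\xi$ is supported on $P$, where $z$ coincides with $z_P$ and is positive, $\langle z,\xi\rangle=\langle z_P,\xi'\rangle>0$; hence $\cos(z,\xi)=\cos(z_P,\xi')\cdot\|z_P\|/\|z\|$. Combining this with the bounds $\cos(z_P,\xi')\ge 1/\sqrt{\log K+1}$ and $\|z_P\|\ge\|z\|/\sqrt2$ yields
$$ |\cos(z,\xi)| \ge \frac1{\sqrt{\log K+1}}\cdot\frac1{\sqrt2} = \frac1{\sqrt{2(\log K+1)}}, $$
which is the claimed estimate. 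The argument contains no genuine obstacle; the only two points that call for a moment's care are the monotonicity $\ell(z_P)\le\ell(z)$ when coordinates are discarded, and the bookkeeping showing that the binary vector obtained for $z_P$ pairs with $z$ exactly as it pairs with $z_P$, so that the passage from $z_P$ back to $z$ costs only the factor $\|z_P\|/\|z\|\ge 1/\sqrt2$.
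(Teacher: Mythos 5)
Your proof is correct and follows essentially the same route as the paper's: decompose $z$ into its positive and negative parts, apply Lemma~\refl{ell-nonneg} to the part carrying at least half of $\|z\|^2$ (the paper phrases this as $z=z^+-z^-$ with the resulting $\xi$ supported where $z'$ is nonzero), and absorb the loss into the factor $\sqrt2$. All the details, including $\ell(z_P)\le\ell(z)$ and the transfer of the inner product back to $z$, match the paper's argument.
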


\begin{proof}
Write $z=z^+-z^-$, where $z^+$ and $z^-$ have non-negative coordinates and
disjoint supports. Observing that $\|z^+\|^2+\|z^-\|^2=\|z\|^2$, choose
$z'\in\{z^+,z^-\}$ with $\|z'\|\ge\|z\|/\sqrt 2$. Clearly, we have
$\ell(z')\le\ell(z)\le K$; therefore, by Lemma~\refl{ell-nonneg}, there
exists $\xi\in\{0,1\}^n$ with
  $$ \cos(z',\xi) \ge \frac1{\sqrt{\log K+1}}. $$
Assuming without loss of generality that for any vanishing coordinate of
$z'$, the corresponding coordinate of $\xi$ also vanishes, we then get
  $$ |\<z,\xi\>| = \<z',\xi\>
           \ge \frac1{\sqrt{\log K+1}}\,\|z'\|\|\xi\|
                           \ge \frac1{\sqrt{2(\log K+1)}}\,\|z\|\|\xi\|, $$
proving the assertion.
\end{proof}

For the remainder of this section, we extend the notion of height of a matrix
(introduced in Section~\refs{intro}) onto vectors by identifying them with
one-column or one-row matrices; that is, the height of a non-zero complex
vector $z$ is
  $$ h(z) := \sqrt{\|z\|_1\|z\|_\infty}/\|z\|. $$
We now prove a version of Lemma~\refl{ell-arb} which applies to a wider class
of vectors; namely, real vectors of bounded height (instead of the bounded
logarithmic diameter).
\begin{lemma}\label{l:real-height}
Let $n\ge 1$ be an integer and $K\ge 1$ a real number. If $z\in\R^n$ is a
vector of height $h(z)\le K$, then there exists $\xi\in\{0,1\}^n$ such that
$|\cos(z,\xi)|\ge 1/(2\sqrt{\log(2K^2)+1})$.
\end{lemma}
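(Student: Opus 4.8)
The goal is to reduce Lemma~\refl{real-height} to Lemma~\refl{ell-arb}, the key point being that a real vector of bounded \emph{height} need not have bounded logarithmic diameter, but most of its ``mass'' is carried by coordinates whose absolute values lie in a bounded range. So the plan is to locate a sub-vector supported on a suitable coordinate subspace which both captures a constant proportion of $\|z\|^2$ and has logarithmic diameter bounded polynomially in $K$.

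\smallskip
Concretely, I would first assume without loss of generality that $\|z\|_\infty=1$, i.e.\ the largest coordinate of $z$ is $1$ in absolute value, so that $\|z\|_1\le K^2\|z\|\le K^2\sqrt n$ and hence $\|z\|^2\le\|z\|_1\le K^2\|z\|$, giving $\|z\|\le K^2$. Now split the coordinates of $z$ into dyadic (or $M$-adic, for a parameter $M$ to be chosen as roughly $2K^2$) blocks according to the size of $|z_i|$: for integer $k\le0$ let $S_k:=\{i: M^{k-1}<|z_i|\le M^k\}$, and let $z^{(k)}$ be the restriction of $z$ to $S_k$. The tail consisting of coordinates with $|z_i|\le M^{k_0}$ for a suitably negative $k_0$ contributes at most $M^{2k_0}n$ to $\|z\|^2$; since $\|z\|^2\ge\|z\|_\infty^2=1$ while also $n$ is controlled via $\|z\|_1\le K^2\|z\|$... here one must be slightly careful, because $n$ itself is not bounded. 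The cleaner route is to bound the tail against $\|z\|_1$ rather than against $n$: the coordinates with $|z_i|\le M^{k_0}$ contribute to $\|z\|^2$ at most $M^{k_0}\sum_i|z_i|=M^{k_0}\|z\|_1\le M^{k_0}K^2\|z\|\le M^{k_0}K^2\cdot K^2$, and choosing $M^{k_0}$ of order $1/(4K^4)$ makes this at most $\tfrac14\|z\|^2$ (using $\|z\|\ge 1$). Thus at least $\tfrac34\|z\|^2$ of the squared norm sits on the $O(\log K)$ many blocks $z^{(k)}$ with $k_0<k\le0$, so by averaging at least one such block $z^{(k)}$ satisfies $\|z^{(k)}\|^2\ge\tfrac{3}{4}\|z\|^2/(\text{number of blocks})$. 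That denominator is $\asymp\log(K^4)\asymp\log K$, which would cost an extra $\sqrt{\log K}$ factor and is \emph{not} good enough for the stated bound.

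\smallskip
So the number of blocks must be reduced to an absolute constant. This is achieved by choosing the block width $M$ large rather than fixed: take the two scales to be $M^k\in\{1,\,1/(4K^4)\}$ forming a \emph{single} block $S:=\{i: 1/(4K^4)<|z_i|\le1\}$ with $\ell(z|_S)\le 4K^4=: K'$, and the estimate above shows directly that $\|z|_S\|^2\ge\tfrac34\|z\|^2$, i.e.\ there is \emph{one} low-diameter sub-vector $z':=z|_S$ with $\|z'\|\ge\tfrac{\sqrt3}{2}\|z\|\ge\tfrac12\|z\|$ and $\ell(z')\le 4K^4$. Applying Lemma~\refl{ell-arb} to $z'$ (on its coordinate subspace) yields $\xi\in\{0,1\}^n$, supported inside $S$, with $|\cos(z',\xi)|\ge 1/\sqrt{2(\log(4K^4)+1)}$; then, exactly as in the proof of Lemma~\refl{ell-arb}, since $\xi$ vanishes off $S$ we have $\<z,\xi\>=\<z',\xi\>$ and $\|\xi\|$ unchanged, so
$$
  |\cos(z,\xi)|=\frac{|\<z',\xi\>|}{\|z\|\|\xi\|}
     \ge\frac{\|z'\|}{\|z\|}\cdot\frac1{\sqrt{2(\log(4K^4)+1)}}
     \ge\frac12\cdot\frac1{\sqrt{2(\log(4K^4)+1)}}.
$$
It remains to check that $\tfrac12\big(2(\log(4K^4)+1)\big)^{-1/2}\ge\tfrac12\big(\log(2K^2)+1\big)^{-1/2}$, i.e.\ $2(\log(4K^4)+1)\le 4(\log(2K^2)+1)$; since $\log(4K^4)=2\log(2K^2)$ this reads $2\log(2K^2)+2\le 4\log(2K^2)+4$, which is obvious for $K\ge1$. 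The only genuine obstacle, and the step to get right, is the tail estimate: one must bound the discarded mass using $\|z\|_1$ (hence the height) rather than the raw dimension $n$, and track the $K$-powers so that the single retained block has diameter polynomial in $K$ while still holding a constant fraction of the norm; everything after that is a direct invocation of Lemma~\refl{ell-arb}.
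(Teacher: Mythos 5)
Your strategy coincides with the paper's: discard the coordinates of $z$ that are small relative to a height-determined threshold, check that the surviving sub-vector $z'$ carries a constant fraction of $\|z\|^2$ while having logarithmic diameter polynomial in $K$, and feed $z'$ to Lemma~\refl{ell-arb}. However, two computations are wrong, and as written the argument does not reach the stated constant. First, under the normalization $\|z\|_\infty=1$ the hypothesis $h(z)\le K$ gives $\|z\|_1\le K^2\|z\|^2$, \emph{not} $\|z\|_1\le K^2\|z\|$; consequently the deductions $\|z\|\le K^2$ and ``tail $\le M^{k_0}K^4$'' are unfounded (take $z=(1,\ldots,1)$: then $h(z)=1$ but $\|z\|=\sqrt n$ is unbounded). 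Your tail estimate itself survives the correction -- the mass below a threshold $\tau$ is at most $\tau\|z\|_1\le\tau K^2\|z\|^2$, so already $\tau=1/(4K^2)$ gives tail $\le\frac14\|z\|^2$ -- but it must be argued against $\|z\|_1$ and the height in this way, not via a bound on $\|z\|$ alone. Second, the final verification contains an algebra slip: with $\ell(z')\le 4K^4$ and $\|z'\|\ge\frac12\|z\|$ you obtain $|\cos(z,\xi)|\ge\frac12\,\bigl(2(\log(4K^4)+1)\bigr)^{-1/2}$, and for this to dominate $\frac12\,\bigl(\log(2K^2)+1\bigr)^{-1/2}$ you need $2(\log(4K^4)+1)\le\log(2K^2)+1$ -- no factor $4$ appears on the right after cancelling the two $\frac12$'s -- and since $\log(4K^4)=2\log(2K^2)$ this fails for every $K\ge1$. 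What your argument actually proves is $|\cos(z,\xi)|\ge 1/\sqrt{16\log(2K^2)+8}$, weaker than the lemma by roughly a factor of $2$, and even retaining the full $\frac{\sqrt3}{2}$ for the norm fraction does not close the gap once $K$ exceeds about $1.17$.

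Both defects disappear if the threshold is chosen as in the paper: set $M:=\|z\|^2/\|z\|_1$ and keep the coordinates with $|z_i|\ge M/2$. The discarded mass is then at most $\frac{M}{2}\|z\|_1=\frac12\|z\|^2$, so $\|z'\|\ge\|z\|/\sqrt2$, while $\ell(z')\le\|z\|_\infty/(M/2)=2h(z)^2\le 2K^2$; Lemma~\refl{ell-arb} then yields exactly $\frac1{\sqrt2}\cdot\frac1{\sqrt{2(\log(2K^2)+1)}}=\frac1{2\sqrt{\log(2K^2)+1}}$. The moral is that the two losses -- the retained norm fraction and the logarithmic diameter of the retained block -- must be balanced at $\frac12\|z\|^2$ and $2K^2$ respectively; your more generous cutoff $1/(4K^4)$ squares the quantity inside the logarithm, and the stated bound has no constant to spare for that.
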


\begin{proof}
Let $M:=\|z\|^2/\|z\|_1$. Writing $z=(z_1\longc z_n)^t$, we have
  $$ \sum_{i\colon |z_i|<M/2} z_i^2 \le \frac12\,M\|z\|_1
                                                       = \frac12\/\|z\|^2, $$
whence
\begin{equation}\label{e:half-norm}
  \sum_{i\colon |z_i|\ge M/2} z_i^2 \ge \frac12\, \|z\|^2.
\end{equation}
Consider the vector $z'=(z_1'\longc z_n')^t$ defined by
  $$ z_i' = \begin{cases}
               z_i &\text{if}\ |z_i| \ge M/2, \\
               0   &\text{if}\ |z_i| < M/2,
             \end{cases} $$
for each $i\in[n]$. Since $\ell(z')\le\|z\|_\infty/(M/2)=2h^2(z)\le 2K^2$, by
Lemma~\refl{ell-arb} there exists $\xi\in\{0,1\}^n$ with
  $$ |\cos(z',\xi)| \ge \frac1{\sqrt{2(\log(2K^2)+1)}}. $$
To complete the proof we notice that $\|z'\|\ge\|z\|/\sqrt 2$ by
\refe{half-norm}, and that if $\xi$ is supported on the set of those
$i\in[n]$ with $|z_i|\ge M/2$ (as we can safely assume), then
$\<z',\xi\>=\<z,\xi\>$.
\end{proof}

Finally, we extend Lemma \refl{real-height} onto vectors with \emph{complex}
coordinates.
\begin{lemma}\label{l:ell-complex}
Let $n\ge 1$ be an integer and $K\ge 1$ a real number. If $z\in\C^n$ is a
vector of height $h(z)\le K$, then there exists $\xi\in\{0,1\}^n$ such that
$|\cos(z,\xi)|\ge 1/(2\sqrt{4\log(2K)+2})$.
\end{lemma}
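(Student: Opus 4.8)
The plan is to reduce the complex case to the real case already handled in Lemma~\refl{real-height}, by splitting $z$ into its real and imaginary parts. Write $z=u+iv$ with $u,v\in\R^n$, so that $\|z\|^2=\|u\|^2+\|v\|^2$; choosing $w\in\{u,v\}$ with $\|w\|\ge\|z\|/\sqrt2$, it suffices to find a binary $\xi$ with $|\cos(w,\xi)|$ bounded below. The obstacle is that passing from $z$ to $w$ does not obviously control the height: we know $h(z)\le K$, but $h(w)$ could in principle be large because $\|w\|$ could be much smaller than $\|z\|$ on individual coordinates, or $\|w\|_\infty$ and $\|w\|_1$ are not directly comparable to those of $z$. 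So the real work is in bounding $h(w)$.

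First I would observe the easy direction: $\|w\|_\infty\le\|z\|_\infty$ and $\|w\|_1\le\|z\|_1$, since $|w_i|\le|z_i|$ coordinatewise. Combined with $\|w\|\ge\|z\|/\sqrt2$, this gives
\begin{equation*}
  h(w) = \frac{\sqrt{\|w\|_1\|w\|_\infty}}{\|w\|}
       \le \frac{\sqrt{\|z\|_1\|z\|_\infty}}{\|z\|/\sqrt2}
       = \sqrt2\,h(z) \le \sqrt2\,K.
\end{equation*}
Then I would apply Lemma~\refl{real-height} to the real vector $w$ with the bound $\sqrt2\,K$ in place of $K$: there exists $\xi\in\{0,1\}^n$ with
\begin{equation*}
  |\cos(w,\xi)| \ge \frac1{2\sqrt{\log(2(\sqrt2 K)^2)+1}}
              = \frac1{2\sqrt{\log(4K^2)+1}}.
\end{equation*}

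Finally I would translate this back to $z$. Since $\xi$ is real, $\<z,\xi\>=\<u,\xi\>+i\<v,\xi\>$, so $|\<z,\xi\>|\ge|\<w,\xi\>|$ regardless of which of $u,v$ was chosen as $w$; together with $\|z\|\ge\|w\|$ this yields
\begin{equation*}
  |\cos(z,\xi)| = \frac{|\<z,\xi\>|}{\|z\|\|\xi\|}
                \ge \frac{|\<w,\xi\>|}{\|z\|\|\xi\|}
                \cdot\frac{\|w\|}{\|w\|}.
\end{equation*}
Here I must be slightly careful: I want $|\cos(z,\xi)|\ge|\cos(w,\xi)|\cdot(\|w\|/\|z\|)\ge|\cos(w,\xi)|/\sqrt2$, which gives the bound $1/(2\sqrt2\sqrt{\log(4K^2)+1})=1/(2\sqrt{2\log(4K^2)+2})=1/(2\sqrt{4\log(2K)+2})$, matching the claimed constant exactly once one uses $\log(4K^2)=2\log(2K)$. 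The only genuinely delicate point is making sure the factor-of-$\sqrt2$ losses are absorbed cleanly and that the height bound $h(w)\le\sqrt2\,K$ is applied with the exact argument $2K^2$ that produces $\log(4K^2)$; everything else is bookkeeping.
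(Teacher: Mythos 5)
Your proposal is correct and follows essentially the same route as the paper: split $z=x+iy$, take the part $w$ with $\|w\|\ge\|z\|/\sqrt2$, bound $h(w)\le\sqrt2\,h(z)$ via $\|w\|_1\le\|z\|_1$ and $\|w\|_\infty\le\|z\|_\infty$, apply Lemma~\refl{real-height}, and use $|\<z,\xi\>|\ge|\<w,\xi\>|$ together with the $\sqrt2$ loss in the norm to land exactly on $1/(2\sqrt{4\log(2K)+2})$. The only blemish is the spurious factor $\|w\|/\|w\|$ in your final display, but the surrounding text makes the intended chain of inequalities clear and the constants check out.
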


\begin{proof}
Write $z=x+iy$, where $x,y\in\R^n$ and $i$ is the imaginary unit. Assume for
definiteness that $\|x\|\ge\|y\|$, so that $\|x\|\ge\|z\|/\sqrt 2$ in view of
$\|z\|^2=\|x\|^2+\|y\|^2$. Since
  $$ h(x)=\frac{\sqrt{\|x\|_1\|x\|_\infty}}{\|x\|}
            \le \frac{\sqrt{\|z\|_1\|z\|_\infty}}{\|z\|/\sqrt2}
                                              = \sqrt2 h(z) \le \sqrt 2K, $$
by Lemma \refl{real-height} there exists a non-zero $\xi\in\{0,1\}^n$ with
  $$ |\<x,\xi\>| \ge \frac1{2\sqrt{\log(4K^2)+1}}\,\|x\|\|\xi\|
                  \ge \frac{1}{2\sqrt{4\log(2K)+2}}\,\|z\|\|\xi\|. $$
The assertion now follows in view of $|\<x,\xi\>|\le|\<z,\xi\>|$.
\end{proof}

We are eventually ready to prove Theorems~\reft{Dnorm} and~\reft{Pnorm}.

\begin{proof}[Proof of Theorem~\reft{Dnorm}]
Suppose that $A$ is a complex matrix with $m$ rows and $n$ columns, and set
$K:=h(A)$. Since $h(A^\ast)=h(A)$, by Lemma~\refl{tt}, there exists
$z\in\C^m$ such that $\|A^\ast z\|>\frac12\|A^\ast\|\|z\|$ and
$\ell(z)<9K^2$. Write $z=(z_1\longc z_m)^t$ and choose $j\in[m]$ so that
$|z_j|=\min\{|z_i|\colon i\in[1,n],z_i\ne 0\}$. In view of
  $$ h^2(z) = \frac{\|z\|_1\|z\|_\infty}{\|z\|^2}
                = \frac{\|z\|_1|z_j|}{\|z\|^2}\,\ell(z) \le \ell(z) < 9K^2 $$
we then get $h(z)<3K$, whence
\begin{align*}
  h(A^\ast z)
        &= \frac{\sqrt{\|A^\ast z\|_1\|A^\ast z\|_\infty}}{\|A^\ast z\|} \\
        &< \frac{\sqrt{\|A^\ast\|_1\|z\|_1\cdot\|A^\ast\|_\infty\|z\|_\infty}}%
                                                   {\|A^\ast\|\|z\|/2} \\
        &= 2h(A^\ast) h(z) \\
        &< 6K^2,
\end{align*}
and by Lemma~\refl{ell-complex}, there exists $0\ne\xi\in\{0,1\}^n$ with
  $$ |\<A^\ast z,\xi\>| > \frac1{2\sqrt{4\log(12K^2)+2}}
                                                   \,\|A^\ast z\|\|\xi\|. $$
As a result,
  $$ |\<z,A\xi\>| = |\<A^\ast z,\xi\>|
               > \frac1{4\sqrt{4\log(12K^2)+2}}\,\|A\|\|z\|\|\xi\|, $$
implying
\begin{align}
  \|A\xi\| &> \frac1{4\sqrt{4\log(12K^2)+2}}\,\|A\|\|\xi\| \label{e:pt2} \\
           &> \frac1{8\sqrt2\sqrt{\log K+2}}\,\|A\|\|\xi\|. \notag
\end{align}
\end{proof}

\begin{proof}[Proof of Theorem~\reft{Pnorm}]
Observing that the assumptions of Theorems~\reft{Dnorm} and~\reft{Pnorm} are
identical, we re-use the proof of the former theorem, including the notation
$K=h(A)$ and the conclusion that there exists a vector $\xi\in\{0,1\}^n$
satisfying \refe{pt2}. For brevity, denote the denominator of the fraction in
the right-hand side of \refe{pt2} by $f(K)$. Similarly to the computation in
the proof of Theorem~\reft{Dnorm}, and taking into account that $h(\xi)=1$
(as $\xi$ is a binary vector), we obtain
\begin{align*}
  h(A\xi) &= \frac{\sqrt{\|A\xi\|_1\|A\xi\|_\infty}}{\|A\xi\|} \\
          &< \frac{\sqrt{\|A\|_1\|\xi\|_1\cdot\|A\|_\infty\|\xi\|_\infty}}
                            {\|A\|\|\xi\|/f(K)} \\
          &=  Kf(K).
\end{align*}
Applying Lemma~\refl{ell-complex} to the vector $A\xi$, we now find a binary
vector $\eta\in\{0,1\}^m$ with
\begin{align*}
  |\<\eta,A\xi\>|
     &> \frac1{2\sqrt{4\log(2Kf(K))+2}}\,\|A\xi\|\|\eta\| \\
     &> \frac1{2f(K)\sqrt{4\log(2Kf(K))+2}}\,\|A\|\|\xi\|\|\eta\|.
\end{align*}
Finally, it is not difficult to verify that for any $K\ge 1$, the denominator
in the right-hand side is smaller than $32\sqrt2(\log K+4)$, and result
follows.
\end{proof}

\section{Proof of Theorem~\reft{Kn}}\label{s:Kn}

Since
\begin{equation*}
  A_m := \begin{pmatrix} 1 & 1 \\ 1 & 0 \end{pmatrix}^{\otimes m},
\end{equation*}
and since the eigenvalues of the matrix $A_1$ are $\phi:=(1+\sqrt 5)/2$ and
$1-\phi=(1-\sqrt5)/2$, we have $\|A_m\|=\phi^m$.

We split Theorem~\reft{Kn} into two theorems stated in the language and
notation of Corollaries~\refc{Dnorm} and \refc{Pnorm}. These two theorems
will then be given separate proofs.

\renewcommand{\thetheorem}{\reft{Kn}$'$}
\begin{theorem}\label{t:Kn1}
For any integer $m\ge 1$ and subset $X\seq\{0,1\}^m$, writing $N_X(v)$ for
the set of neighbors of a vertex $v\in\{0,1\}^m$ in $X$ (in the graph
$\Gam_m$), we have
  $$ \sum_{v\in\{0,1\}^m} |N_X(v)|^2 \ll \frac{\phi^{2m}}{\sqrt m}\,|X|, $$
with an absolute implicit constant.
\end{theorem}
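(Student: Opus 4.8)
Write $\xi=\mathbf 1_X$ for the $0/1$ indicator of $X$, viewed as a vector indexed by $\{0,1\}^m$. Then $(A_m\xi)_v=|N_X(v)|$, so the assertion is $\|A_m\xi\|^2\ll\phi^{2m}|X|/\sqrt m$; equivalently $\|A_m\|_\Del^2\ll\|A_m\|^2/\sqrt m$ when tested on $0/1$ vectors. My plan is to diagonalize $A_m$ explicitly. Since $A_m=B^{\otimes m}$ with $B=A_1$, and $B$ has orthonormal eigenvectors $p=(\phi,1)/\sqrt{\phi^2+1}$ and $q=(1,-\phi)/\sqrt{\phi^2+1}$ with eigenvalues $\phi$ and $-1/\phi$, the tensors $e_S:=\bigotimes_{i\notin S}p\otimes\bigotimes_{i\in S}q$ ($S\seq[m]$) form an orthonormal eigenbasis, $A_me_S=(-1)^{|S|}\phi^{m-2|S|}e_S$. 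Hence, with $\Pi_k$ the orthogonal projection onto $\operatorname{span}\{e_S:|S|=k\}$,
 $$ \|A_m\xi\|^2=\phi^{2m}\sum_{k=0}^m\phi^{-4k}\|\Pi_k\xi\|^2,\qquad \sum_{k=0}^m\|\Pi_k\xi\|^2=|X|. $$

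Fix a cut-off $k_0:=\lceil c\log m\rceil$. The tail is harmless: $\sum_{k>k_0}\phi^{-4k}\|\Pi_k\xi\|^2\le\phi^{-4k_0}|X|\le|X|/\sqrt m$ once $c$ is chosen large. Therefore the whole theorem reduces to showing that the projection $\Pi_{\le k_0}:=\sum_{|S|\le k_0}e_Se_S^t$ onto the ``top'' part of the spectrum satisfies the \emph{low-degree estimate}
 $$ \sum_{k\le k_0}\|\Pi_k\xi\|^2\ =\ \xi^t\Pi_{\le k_0}\xi\ \ll\ \frac{|X|}{\sqrt m}\qquad(k_0\asymp\log m) $$
uniformly over $0/1$ vectors $\xi$; I expect this to be the main difficulty.

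To prove it I would use the closed form for the eigenfunction values,
 $$ \<e_S,\xi\>=(\phi^2+1)^{-m/2}\,\phi^{\,m-|S|}\sum_{u\in X}(-\phi^2)^{|S\cap\operatorname{supp}u|}\,\phi^{-|u|}, $$
which reveals the two mechanisms to exploit: (i) the weight $\phi^{-|u|}$ makes the \emph{low-weight} members of $X$ dominate, and a $0/1$ set has few of those; (ii) for $|S|\ge1$ the signs $(-\phi^2)^{|S\cap\operatorname{supp}u|}$ produce cancellation \emph{between} different subspaces --- which is essential, since the naive bound $\xi^t\Pi_{\le k_0}\xi\le\binom m{\le k_0}\max_S\<e_S,\xi\>^2$ throws away the factor $\binom m{\le k_0}=e^{O((\log m)^2)}$. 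Concretely I would expand $\xi^t\Pi_{\le k_0}\xi=\sum_{u,u'\in X}(\Pi_{\le k_0})_{uu'}$; a generating-function computation identifies $(\Pi_{\le k_0})_{uu'}$ with the sum of the coefficients of $z^0,\dots,z^{k_0}$ in
 $$ (\phi^2+1)^{-m}\,\phi^{\,2m-|u|-|u'|}\,(1+\phi^2 z)^{|u\cap u'|}(1-z)^{d(u,u')}(1+\phi^{-2}z)^{\,m-|u\cup u'|}, $$
the intersection, union, and Hamming distance being of the supports. For $u\ne u'$ this polynomial vanishes at $z=1$, so the off-diagonal part is carried by its high-degree coefficients, and the golden-ratio identities $\phi^{-2}=2-\phi$, $\phi^2/(\phi^2+1)=1/(3-\phi)$ and $\phi^2(3-\phi)=\phi^2+1$ keep the bookkeeping clean. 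The one scale at which the argument is forced to be tight is the binomial sum $\sum_j\binom mj\phi^{-2j}=(3-\phi)^m$, whose largest term is $\asymp(3-\phi)^m/\sqrt m$: this ``width of the binomial bump'' is exactly the $1/\sqrt m$ of the theorem, and it is visibly achieved by the family $X=\{u:|u|=r^\ast\}$ with $r^\ast$ maximizing $\binom mr\phi^{-2r}$.

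The real work is quantitative --- the exponential gains ($(3-\phi)^m$, $\phi^m$) must absorb the subexponential losses ($\binom m{\le k_0}$, $\phi^{2k_0}$), the constants must be the Fibonacci ones, and the cancellation in (ii) must genuinely be used. It also helps to separate regimes: if $|X|\le(3-\phi)^m/m^{\omega(1)}$ then $\|A_m\xi\|^2=\sum_{u,v\in X}2^{\,m-|u\cup v|}\le 2^m|X|^2\ll\phi^{2m}|X|/\sqrt m$ already, while for larger $|X|$ the low-degree estimate has ample room.
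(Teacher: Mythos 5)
Your reduction is correct and elegant as far as it goes: the tensor diagonalization of $A_m$, the eigenvalues $(-1)^{|S|}\phi^{m-2|S|}$, the identity $\|A_m\xi\|^2=\phi^{2m}\sum_k\phi^{-4k}\|\Pi_k\xi\|^2$, and the disposal of the tail $k>k_0\asymp\log m$ are all right. But the proposal then stops exactly where the theorem begins. The ``low-degree estimate'' $\xi^t\Pi_{\le k_0}\xi\ll|X|/\sqrt m$ uniformly over $0/1$ vectors is not a lemma you have reduced the problem to --- it \emph{is} the problem, with zero slack: as you yourself observe, already the $k=0$ term attains $\asymp|X|/\sqrt m$ when $X=S_{r^*}$, so any proof must be sharp, and the cancellation in mechanism (ii) must be exploited quantitatively against the $\binom m{\le k_0}$ loss. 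The generating-function expression for $(\Pi_{\le k_0})_{uu'}$ and the remark that it vanishes at $z=1$ off the diagonal are suggestive, but no bound is derived from them; nothing in the sketch controls $\sum_{u,u'\in X}(\Pi_{\le k_0})_{uu'}$ for an \emph{arbitrary} $X$. This is precisely the difficulty the paper circumvents by a completely different mechanism: it first identifies the extremal $X$, using the Bollob\'as--Leader compression theorem (via the representation $\sig(X)=\sum_{x,y\in X}2^{m-|x+y|}$ and partial summation) to reduce to $X$ a Hamming ball, then by Cauchy--Schwarz to a single sphere $S_r$, where $\sig(X)/|X|$ becomes the explicit sum $\sum_i\binom{m-i}{r}\binom{m-r}{i}$, finally estimated by an entropy/saddle-point argument around $x_0=y_0=(5-\sqrt5)/10$. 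Your plan contains no substitute for this extremality step, and without one the uniform-in-$X$ estimate remains unproved.

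A secondary slip: the closing ``easy regime'' is too generous. The trivial bound $\|A_m\xi\|^2=\sum_{u,v\in X}2^{m-|u\cup v|}\le 2^m|X|^2$ gives the theorem only when $|X|\ll(\phi^2/2)^m/\sqrt m\approx1.309^m$, whereas $(3-\phi)^m\approx1.382^m$; for $|X|$ between these thresholds the trivial bound already fails (compare the paper's cutoff $|X|\le1.3^m$). This does not affect the main gap but should be corrected if you pursue the two-regime strategy.
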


\renewcommand{\thetheorem}{\reft{Kn}$''$}
\begin{theorem}\label{t:Kn2}
For any integer $m\ge 1$ and subsets $X,Y\seq\{0,1\}^m$, writing $e(X,Y)$ for
the number of edges in $\Gam_m$ joining a vertex from $X$ with a vertex from
$Y$, we have
  $$ e(X,Y) \ll \frac{\phi^{m}}{\sqrt m}\,\sqrt{|X||Y|}, $$
with an absolute implicit constant.
\end{theorem}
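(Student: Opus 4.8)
The plan is to estimate $e(X,Y)=\xi^t A_m\eta$, where $\xi,\eta\in\{0,1\}^{2^m}$ are the characteristic vectors of $X$ and $Y$, by exploiting the tensor structure: recall $A_m=B^{\otimes m}$ with $B=\bigl(\begin{smallmatrix}1&1\\1&0\end{smallmatrix}\bigr)$, whose eigenvalues are $\phi$ and $1-\phi=-\phi^{-1}$, so that the eigenvalues of $A_m$ are $(-1)^k\phi^{m-2k}$ with multiplicities $\binom mk$ ($0\le k\le m$). The whole theorem will be reduced to one combinatorial inequality involving $X$ alone.

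First I would apply the Cauchy--Schwarz inequality to the spectral expansion of $A_m$: writing $A_m=\sum_i\lambda_i v_i v_i^t$ with the $v_i$ orthonormal, one has $|\xi^t A_m\eta|\le\sum_i|\lambda_i|\,|\<v_i,\xi\>|\,|\<v_i,\eta\>|\le\bigl(\xi^t|A_m|\,\xi\bigr)^{1/2}\bigl(\eta^t|A_m|\,\eta\bigr)^{1/2}$, where $|A_m|$ is the absolute value of the symmetric matrix $A_m$. Since the modulus of a product of eigenvalues of $B$ is the product of their moduli, $|A_m|=|B|^{\otimes m}$ with $|B|=(B^2)^{1/2}$; and since $|B|$ has eigenvalues $\phi,\phi^{-1}$ one computes $|B|=\tfrac{\phi}{\phi+2}\bigl(\begin{smallmatrix}3&1\\1&2\end{smallmatrix}\bigr)$. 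Identifying $\{0,1\}^m$-vectors with subsets of $[m]$ and multiplying out the tensor power entrywise then gives
$$ \xi^t|A_m|\,\xi=\Bigl(\tfrac{\phi}{\phi+2}\Bigr)^{m}\sum_{v,w\in X}3^{\,m-|v\cup w|}\,2^{\,|v\cap w|}, $$
and likewise for $\eta$. Since $\|A_m\|=\phi^m$, Theorem~\reft{Kn2} will follow once the \emph{key estimate}
$$ \sum_{v,w\in X}3^{\,m-|v\cup w|}\,2^{\,|v\cap w|}\ \ll\ \frac{(\phi+2)^{m}}{\sqrt m}\,|X|\qquad(X\seq\{0,1\}^m) $$
is proved; for then $\xi^t|A_m|\xi\ll\phi^m|X|/\sqrt m$, and the same for $\eta$.

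The key estimate is where the real work lies. Using $|v\cup w|=|v|+|w|-|v\cap w|$ and the identity $6^{|v\cap w|}=\sum_{z\seq v\cap w}5^{|z|}$, its left-hand side equals $3^m\sum_{z\seq[m]}5^{|z|}\psi(z)^2$ with $\psi(z):=\sum_{v\in X,\,v\supseteq z}3^{-|v|}$, so one needs $\sum_z 5^{|z|}\psi(z)^2\ll(\tfrac{\phi+2}3)^m|X|/\sqrt m$. The difficulty is that the obvious bounds on $\psi(z)$ — Cauchy--Schwarz against $\sum_{v\supseteq z}3^{-2|v|}$, or $\psi(z)\le\min\bigl(|\{v\in X:v\supseteq z\}|,\,(4/3)^{m-|z|}\bigr)$ — each give away an exponential factor, because they do not simultaneously use that $\psi(z)$ can be large only when $X$ contains many low-weight sets above $z$, while the low-weight layers are themselves small. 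The plan is to stratify $X$ by Hamming weight, $X=\bigsqcup_j X_j$ with $|X_j|\le\binom mj$, and to estimate the contribution of each pair of weight classes to $\sum_{v,w\in X}3^{\,m-|v|-|w|}6^{|v\cap w|}$ using $\sum_{v\in\binom{[m]}{j}}6^{|v\cap w|}=[x^j](1+6x)^{l}(1+x)^{m-l}$ for $|w|=l$, together with $|X_j|\le\binom mj$. The extra factor $m^{-1/2}$ should then emerge from the unimodality and $\Theta(\sqrt m)$-width concentration of these generalized binomial sequences: no single weight level — hence no single $X$ of a given size — can carry more than an $O(m^{-1/2})$ share of the weighted totals involved. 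Extracting this sharp $m^{-1/2}$, rather than the $m^{-1/4}$ that a crude Cauchy--Schwarz (equivalently, the trivial inequality $\|A_m\|_P\le\|A_m\|_\Delta$ combined with Theorem~\reft{Kn1}) yields, is the main obstacle; an analogous but simpler estimate, with $2^{\,m-|v\cup w|}$ in place of $3^{\,m-|v\cup w|}2^{\,|v\cap w|}$, underlies Theorem~\reft{Kn1}.
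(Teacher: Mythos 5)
Your reduction is correct as far as it goes: the identity $|A_m|=|B|^{\otimes m}$ with $|B|=\frac{\phi}{\phi+2}\begin{pmatrix}3&1\\1&2\end{pmatrix}$, the Cauchy--Schwarz bound $|\xi^tA_m\eta|\le(\xi^t|A_m|\xi)^{1/2}(\eta^t|A_m|\eta)^{1/2}$, and the entrywise expansion $\xi^t|A_m|\xi=(\phi/(\phi+2))^m\sum_{v,w\in X}3^{\,m-|v\cup w|}2^{\,|v\cap w|}$ all check out. But the ``key estimate'' to which you reduce the theorem is not proved; it is only announced together with a plan, and you yourself flag the extraction of the factor $m^{-1/2}$ as the main obstacle. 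That estimate carries the entire content of the theorem: it asserts $\|\,|A_m|^{1/2}\xi\,\|^2\ll\|\,|A_m|^{1/2}\|^2\,|X|/\sqrt m$, i.e.\ a bound of exactly the type of Theorem~\reft{Kn1} but for the positive matrix $|A_m|^{1/2}$, and with the full $m^{-1/2}$ saving that crude Cauchy--Schwarz fails to give. It is not clear that the sketched stratification delivers it: bounding $|X_j|\le\binom mj$ layer by layer amounts to replacing $X$ by all of $\{0,1\}^m$ and only suffices when $|X|$ is exponentially large (roughly $|X|\gtrsim(7/(\phi+2))^m$), while for smaller $X$ one must exploit $\sum_j|X_j|=|X|$ and the interaction between layers --- precisely the delicate part of the proof of Theorem~\reft{Kn1}. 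Note also that the compression tool used there (Bollob\'as--Leader) is unavailable here, since your weight $3^{\,m-|v\cup w|}2^{\,|v\cap w|}$ is not a function of $|v\cup w|$ alone. A Perron-eigenvector computation (take $X=S_r$ with $r\approx x_0m$) shows your key estimate is essentially sharp, so there is no slack: the concentration argument would have to be carried out in full, and until it is, the proposal has a genuine gap.

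For comparison, the paper's proof avoids the spectral decomposition entirely. One first deletes from $X$ every vertex with fewer than $e(X,Y)/(2|X|)$ neighbours in $Y$ (and symmetrically from $Y$), which never decreases $e(X,Y)/\sqrt{|X||Y|}$. Taking then $x\in X$ and $y\in Y$ of maximal weights $m_1,m_2$, every neighbour of $x$ in $Y$ is supported off $x$ and has weight at most $m_2$, whence $e(X,Y)/(2|X|)\le\sum_{k\le m_2}\binom{m-m_1}{k}$, and symmetrically for $y$; multiplying the two bounds reduces the theorem to the single inequality $\sum_{k\le m_2}\binom{m-m_1}{k}\cdot\sum_{k\le m_1}\binom{m-m_2}{k}\ll\phi^{2m}/m$, which follows from the entropy-function analysis already set up for Lemma~\refl{Kn1}. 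If you wish to salvage your route, you must supply a complete proof of your key estimate, which appears to be at least as hard as Theorem~\reft{Kn1} itself.
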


We now prepare the technical ground for the proofs of both theorems.

Recall, that the entropy function is defined by
  $$ H(x) := -x\ln x-(1-x)\ln(1-x),\ 0<x<1, $$
extended by continuity onto the endpoints: $H(0)=H(1)=0$.

Let
  $$ \Ome:=\{(x,y)\in\R^2 \colon x\ge 0,\ y\ge 0,\ x+y\le 1 \}, $$
and consider the function
  $$ f(x,y) := (1-x)H\(\frac y{1-x}\)+(1-y)H\(\frac x{1-y}\),
                                                      \quad (x,y)\in\Ome $$
(again, extended by continuity to vanish at the vertex points
$(0,0),\,(0,1)$, and $(1,0)$). Investigating the partial derivatives
  $$ \frac{\partial f}{\partial x} = \ln \frac{(1-x-y)^2}{x(1-x)} $$
and
\begin{equation}\label{e:secder}
  \frac{\partial^2 f}{\partial x^2} = - \frac{1-x-y+2xy}{x(1-x)(1-x-y)} < 0,
\end{equation}
with similar expressions for the derivatives with respect to $y$, we conclude
that $f$ is concave on $\Ome$, and that it is a unimodal function of $x$ for
any fixed $y\in[0,1]$, and a unimodal function of $y$ for any fixed
$x\in[0,1]$. Consequently, the maximum of $f$ on $\Ome$ is attained in the
unique point $(x_0,y_0)\in\Ome$ where both partial derivatives $\partial
f/\partial x$ and $\partial f/\partial y$ vanish; that is,
  $$ \frac{(1-x-y)^2}{x(1-x)} = \frac{(1-x-y)^2}{y(1-y)} = 1. $$
The solution of this system is easily found to be
 $x_0=y_0=(5-\sqrt 5)/10\approx 0.276$, and a simple computation confirms that
the corresponding maximum value is
  $$ f(x_0,y_0) = 2\ln\phi. $$

We will also need well-known estimates for the binomial coefficients which
can be easily derived, for instance, from \cite[Ch. 10,~\S 11, Lemmas~7
and~8]{b:mcwsl}:
\begin{equation}\label{e:binomial1}
  \frac1{\sqrt{2m}} \: e^{mH(k/m)} \le \binom mk
               \le \sum_{i=0}^k \binom mi \le e^{mH(k/m)},\quad 0\le k\le m/2,
\end{equation}
and
\begin{equation}\label{e:binomial2}
  \sum_{i=0}^k \binom mi \ll_\eps \frac1{\sqrt m}\:e^{mH(k/m)},
                                                 \quad 1\le k\le (1-\eps) m/2,
\end{equation}
for any $\eps>0$ (with the implicit constant depending on $\eps$).

The following lemma is used in the proof of Theorem~\reft{Kn1}.

\begin{lemma}\label{l:Kn1}
For integer $m\ge 0$ and $j\in[0,m]$, let
  $$ \tau_m(j) := \sum_{i=0}^{m-j} \binom{m-i}{j} \binom{m-j}{i}. $$
Then
  $$ \max \{ \tau_m(j)\colon j\in[0,m] \} \ll \frac{\phi^{2m}}{\sqrt m}, $$
with an absolute implicit constant.
\end{lemma}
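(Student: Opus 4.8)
The plan is to estimate $\tau_m(j)$ uniformly in $j$ by recognizing the summand $\binom{m-i}{j}\binom{m-j}{i}$ as an exponential in $m$ whose rate is governed by the function $f$ introduced above. Writing $j = ym$ and $i = xm$ (so that $(x,y)$ ranges over a mesh inside $\Ome$, since $i \le m-j$ forces $x+y \le 1$), I would apply the upper bounds in \refe{binomial1}: one has $\binom{m-i}{j} \le e^{(m-i)H(j/(m-i))}$, which equals $e^{(1-x)m\,H(y/(1-x))}$ up to the precise ranges, and similarly $\binom{m-j}{i} \le e^{(1-y)m\,H(x/(1-y))}$ (after symmetrizing $i$ about $(m-j)/2$ if needed, since $H$ is symmetric about $1/2$). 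Multiplying, the $i$-th summand is at most $e^{m f(x,y)}$, and since we established $\max_\Ome f = 2\ln\phi$, each summand is at most $\phi^{2m}$. Summing over the at most $m+1$ values of $i$ would give $\tau_m(j) \ll m\,\phi^{2m}$ — which is off by a factor $m^{3/2}$ from the claim. So the crude bound is not enough; the real work is in harvesting two separate savings of $\sqrt m$ each, one from a Laplace-type concentration of the sum around its peak and one from the sub-exponential refinement \refe{binomial2}.

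Concretely, I would argue as follows. First, fix $j$ and let $g(i)$ denote the $i$-th summand. Because $f$ is strictly concave (the second-derivative computation \refe{secder} and its $y$-analogue), along the slice $y = j/m$ fixed the exponent $i \mapsto f(i/m, j/m)$ is strictly concave with a non-degenerate maximum, so $g(i)$ decays at least geometrically away from its peak $i^\ast$ once $|i - i^\ast| \gg \sqrt m$; more carefully, a second-order Taylor expansion gives $g(i) \ll g(i^\ast)\exp(-c(i-i^\ast)^2/m)$ on the bulk, and hence $\sum_i g(i) \ll \sqrt m \cdot g(i^\ast)$ rather than $m \cdot g(i^\ast)$. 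This recovers one factor of $\sqrt m$: $\tau_m(j) \ll \sqrt m\, \max_i g(i)$. The subtlety here is handling the boundary of $\Ome$ — if the peak $i^\ast$ (for the given $j$) sits near $i=0$ or $i=m-j$, the Gaussian tail argument must be replaced by a direct geometric-series bound, but that only helps. One must also check uniformity: the concavity constant $c$ in the exponent bound can be taken independent of $j$ away from the corners of $\Ome$, and the corner cases ($j$ near $0$ or near $m$) are again easier because there $\tau_m(j)$ is a short sum of small binomials. This is the step I expect to be the main obstacle: making the Laplace estimate genuinely uniform in $j$, including the degenerate/boundary regimes, without a messy case analysis.

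Second, to get the \emph{other} $\sqrt m$ I would not estimate $\max_i g(i)$ by a single product of binomial upper bounds, but rather group the sum. Observe that $\tau_m(j) = \sum_i \binom{m-i}{j}\binom{m-j}{i}$ can be reorganized by summing over $i$ first against the slowly varying factor: for a range of $i$ of length $\approx \sqrt m$ around the peak, $\binom{m-i}{j}$ is within a constant factor of its peak value $\binom{m-i^\ast}{j} \le e^{(m-i^\ast)H(j/(m-i^\ast))}$, while $\sum_{i} \binom{m-j}{i}$ over that window is at most $\sum_{i=0}^{(m-j)/2}\binom{m-j}{i} \ll \frac{1}{\sqrt{m-j}} e^{(m-j)H(\cdot)}$ by \refe{binomial2} — provided $j$ is bounded away from $m$ so that $m-j \gg m$; when $m-j$ is small the tail is trivially $\le 2^{m-j}$, which is negligible compared to $\phi^{2m}$. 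Combining this partial-sum refinement with the concentration from Step 1 (the window really does have length $\ll \sqrt m$) yields $\tau_m(j) \ll \frac{1}{\sqrt m} e^{m f(i^\ast/m,\, j/m)} \le \frac{1}{\sqrt m}\phi^{2m}$, uniformly in $j$, which is exactly the claim. I would present the proof by first disposing of the easy ranges ($j \le \eps m$, $j \ge (1-\eps)m$, or the peak $i^\ast$ within $O(\sqrt m)$ of an edge of $\Ome$), where crude bounds combined with \refe{binomial1}--\refe{binomial2} already give the result, and then treating the generic interior case by the combined Laplace-plus-\refe{binomial2} argument above, invoking $\max_\Ome f = 2\ln\phi$ at the final step.
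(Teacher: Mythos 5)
Your overall architecture --- reduce each summand to $e^{mf(i/m,j/m)}$, dispose of the range where $(i/m,j/m)$ stays away from $(x_0,y_0)$ using $\max_\Ome f=2\ln\phi$, then extract the polynomial savings from a Laplace-type concentration combined with the refinement \refe{binomial2} --- is the same as the paper's. But your Step 2, where you harvest the remaining saving, contains a genuine error. Near the peak of the product $g(i)=\binom{m-i}{j}\binom{m-j}{i}$ the individual factors are \emph{not} slowly varying: one has $\binom{m-i-1}{j}\big/\binom{m-i}{j}=(m-i-j)/(m-i)$, which at the peak equals $(1-x_0-y_0)/(1-x_0)=1/\phi\approx 0.618$. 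Thus $\binom{m-i}{j}$ changes by a constant factor with \emph{each unit step} in $i$, hence by a factor of order $\phi^{\sqrt m}=e^{c\sqrt m}$ across a window of length $\sqrt m$; it is nowhere near ``within a constant factor of its peak value''. Only the product $g(i)$ is stationary at $i^\ast$ (the geometric rates of the two factors cancel exactly there), so any bound of the form $\bigl(\max_{\mathrm{window}}\binom{m-i}{j}\bigr)\cdot\sum_{\mathrm{window}}\binom{m-j}{i}$ loses a factor $e^{c\sqrt m}$, which swamps every power of $m$. There is also a bookkeeping slip: starting from the crude $m\,\phi^{2m}$ you must save $m^{3/2}$, and a window of width $\sqrt m$ together with a single application of \refe{binomial2} saves only $m$.

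The repair --- and the paper's actual route --- is to apply \refe{binomial2} to \emph{each} of the two binomial coefficients separately. In the relevant range $(i/m,j/m)\in(0.2,0.3)^2$ both ratios $j/(m-i)$ and $i/(m-j)$ lie in $(1/4,3/7)$, bounded away from $1/2$, so each coefficient is $\ll m^{-1/2}$ times its entropy bound, giving the termwise estimate $g(i)\ll m^{-1}e^{mf(i/m,j/m)}$. The remaining sum $m^{-1}\sum_i e^{mf(i/m,j/m)}$ is then compared with $\int e^{mf(x,j/m)}\,dx$ using the concavity and unimodality of $f(\cdot,j/m)$, and the uniform quadratic bound $f(x,y)\le 2\ln\phi-\tfrac23(x-x_0)^2$ on all of $\Ome$ (obtained by Taylor-expanding $H$ about $z_0=x_0/(1-x_0)$) turns that integral into a Gaussian of total mass $O(m^{-1/2})$. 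This both makes your Laplace step rigorous and uniform in $j$ --- with no case analysis over where $i^\ast$ sits, since the quadratic bound holds globally --- and supplies the factor of $\sqrt m$ that your accounting is missing.
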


\begin{proof}
We use the notation introduced at the beginning of this section; thus, for
instance, in view of \refe{binomial1},
\begin{equation}\label{e:binprod}
  \binom{m-i}{j} \binom{m-j}{i}
                  \le e^{(m-i)H(j/(m-i))+(m-j)H(i/(m-j))} = e^{mf(i/m,j/m)}.
\end{equation}

Let $I:=(0.2,0.3)$. Since $x_0=y_0\in I$, we have
 $\max_{\Ome\stm(I\times I)}f<2\ln\phi$; therefore, by \refe{binprod}, we can
fix $B<\phi^2$ so that
\begin{equation}\label{e:jnotin}
  \tau_m(j)=O(mB^m),\quad j/m\notin I,
\end{equation}
and also
\begin{equation}\label{e:taumj1}
  \tau_m(j) = \sum_{\sub{0\le i\le m-j \\ i/m\in I}}
                    \binom{m-i}{j} \binom{m-j}{i} + O(mB^m),\quad j/m\in I.
\end{equation}
For every pair $(i,j)$ with $(i/m,j/m)\in I\times I$, we have
  $$ \frac14 = \frac{0.2m}{m-0.2m} < \frac{i}{m-j}
                                         < \frac{0.3m}{m-0.3m} = \frac37, $$
and by symmetry, the resulting estimate holds true also for the ratio
$j/(m-i)$; consequently, in view of \refe{taumj1} and \refe{binomial2}, if
$j/m\in I$, then
\begin{align}\label{e:approxf}
  \tau_m(j)
    &\le \sum_{\sub{0\le i\le m-j \\ i/m\in I}}
            \frac1{\sqrt m}\, e^{(m-i)H(j/(m-i))}
              \cdot \frac1{\sqrt m}\, e^{(m-j)H(i/(m-j))} +O(mB^m) \notag \\
    &\le \frac1m \, \sum_{i=0}^{m-j} e^{mf(i/m,j/m)} + O(mB^m).
\end{align}
Since $f(x,j/m)$ is a concave function of $x$ for any fixed $j\in[0,m]$, on
each interval of the form $[i/m,(i+1)/m]$ it attains its minimum value at one
of the endpoints of the interval, and so does the function $e^{mf(x,j/m)}$.
Hence,
\begin{align*}
  \int_{i/m}^{(i+1)/m} e^{mf(x,j/m)}\, dx
        &\ge \frac1m\, \min \{ e^{mf(x,j/m)}\colon i/m\le x\le (i+1)/m \} \\
        &=   \frac1m\, \min \{ e^{mf(i/m,j/m)}, e^{mf((i+1)/m,j/m)} \};
                                                      \quad 0\le i\le m-j-1.
\end{align*}
Similarly, unimodality of $f(x,j/m)$ on the interval $x\in[0,1-j/m]$ implies
that of $e^{mf(x,j/m)}$; as a result, adding up for all $i\in[0,m-1-j]$ the
estimate just obtained, we get
\begin{align}\label{e:sum2int}
  \frac1m\,\sum_{i=0}^{m-j} e^{mf(i/m,j/m)}
    &\le \int_0^{1-j/m} e^{mf(x,j/m)}\,dx
         + \frac1m\,\max \{ e^{mf(i/m,j/m)} \colon 0\le i\le m-j \} \notag \\
    &\le \int_0^{1-j/m} e^{mf(x,j/m)}\,dx + \frac{\phi^{2m}}m.
\end{align}

We now use the second-order polynomial approximation to show that
\begin{equation}\label{e:saddle}
  f(x,y) \le 2\ln\phi-\frac23(x-x_0)^2,\quad (x,y)\in\Ome;
\end{equation}
substituting this estimate into \refe{sum2int} will eventually allow us to
compete the proof of the lemma.

Let $z_0:=x_0/(1-x_0)$. A simple computation confirms that
\begin{gather*}
  z_0     = 2-\phi\approx 0.382, \\
  H'(z_0) = \ln(z_0^{-1}-1) = \ln\phi,
  \intertext{and}
  H''(z) = -\frac1{z(1-z)}\le -4,\quad z\in(0,1);
\end{gather*}
consequently, by Taylor's formula,
  $$ H(z) \le H(z_0) + (z-z_0)\ln\phi - 2(z-z_0)^2,\quad z\in(0,1). $$
Applying this estimate with $z=y/(1-x)$ and multiplying the result by $1-x$,
in view of $1/(1-x)\ge 1$ we get
  $$ (1-x)H\(\frac y{1-x}\) \le (1-x)H(z_0) + (y-(1-x)z_0)\ln\phi
                                           - 2(y-(1-x)z_0)^2. $$
Interchanging $x$ and $y$ and adding the resulting estimate to the one just
obtained yields
\begin{equation}\label{e:f-decomp}
  f(x,y) \le L(x,y) - 2Q(x,y),\quad (x,y)\in\Ome,
\end{equation}
where
  $$ L(x,y) = (2-x-y)H(z_0)+(x+y-(2-x-y)z_0)\ln\phi $$
and
  $$ Q(x,y) = (x-(1-y)z_0)^2 + (y-(1-x)z_0)^2. $$
One easily verifies that $H(z_0)=(z_0+1)\ln\phi$ and, as a result, the linear
part is actually constant:
\begin{equation}\label{e:lin-part}
  L(x,y)=2\ln\phi.
\end{equation}
To estimate the quadratic part we set $\xi:=x-x_0$ and $\eta:=y-y_0$; with
this notation, and taking into account that $x_0=(1-y_0)z_0$ and
$y_0=(1-x_0)z_0$, we have
\begin{align}\label{e:quadr-part}
  Q(x,y) &=   (\xi+z_0\eta)^2 + (\eta+z_0\xi)^2 \notag \\
         &=   (z_0^2+1)(\xi^2+\eta^2) + 4z_0\xi\eta \notag \\
         &\ge (z_0-1)^2(\xi^2+\eta^2) \notag \\
         &=   z_0(\xi^2+\eta^2) \notag \\
         &\ge \frac13 (x-x_0)^2.
\end{align}

From \refe{f-decomp}, \refe{lin-part}, and \refe{quadr-part} we get the
desired estimate \refe{saddle}. Substituting it into \refe{sum2int} and
recalling \refe{approxf}, we obtain
\begin{align*}
  \tau_m(j)
      &\le \phi^{2m} \int_0^{1-j/m} e^{-(2/3)m(x-x_0)^2}\,dx
                                                     + O(\phi^{2m}/m) \\
      &<   \phi^{2m} \int_{-\infty}^\infty e^{-(2/3)m(x-x_0)^2}\,dx
                                                     + O(\phi^{2m}/m) \\
      &=   O(\phi^{2m}/\sqrt m), \quad j/m\in I;
\end{align*}
along with \refe{jnotin}, this proves the lemma.
\end{proof}

We are now ready for the proofs of Theorems~\reft{Kn1} and~\reft{Kn2}.
\begin{proof}[Proof of Theorem~\reft{Kn1}]
Writing for brevity
  $$ \sig(X) := \sum_{v\in\{0,1\}^m} |N_X(v)|^2, $$ we want to prove that
\begin{equation}\label{e:sigX-toprove}
  \sig(X) \ll \frac{\phi^{2m}}{\sqrt m}\, |X|
\end{equation}
for every subset $X\seq\{0,1\}^m$.

For a vector $v\in\R^m$, let $|v|$ denote the number of non-zero coordinates
of $v$; thus, for instance, if $v\in\{0,1\}^m$, then $|v|=\|v\|^2$. Since,
for any $v\in\{0,1\}^m$, the total number of neighbors of $v$ in $\Gam_m$ is
$2^{m-|v|}$, we have
  $$ \sig(X) \le \sum_{v\in\{0,1\}^m} |N_X(v)|^2
        \le \sum_{v\in\{0,1\}^m} 4^{m-|v|} = 5^m < \phi^{2m}\cdot 1.91^m, $$
establishing \refe{sigX-toprove} in the case where $|X|\ge 1.92^m$. On the
other hand, $|N_X(v)|\le|X|$ implies
  $$ \frac{\sig(X)}{|X|} \le \sum_{v\in\{0,1\}^m} |N_X(v)| \le 2^m|X|, $$
and if $|X|\le 1.3^m$, then the right-hand side does not exceed $2.6^m$,
whereas $\phi^2>2.61$. With these observations in mind, for the rest of the
proof we assume that
\begin{equation}\label{e:XLowerUpper}
  1.3^m < |X| < 1.92^m.
\end{equation}

For $r\in[0,m]$, write $B_r:=\{v\in\{0,1\}^m\colon |v|\le r \}$; thus,
  $$ |B_r| = \sum_{i=0}^r \binom mi. $$
Let $q\in[1,m-1]$ be defined by
  $$ |B_{q-1}| < |X| \le |B_q|. $$
In view of \refe{XLowerUpper} and \refe{binomial1}, this implies
\begin{equation}\label{e:qm}
  cm < q < Cm
\end{equation}
with some absolute constants $0<c<C<1/2$; consequently,
  $$ \frac{|B_q|}{|B_{q-1}|} \le 1 + \binom mq\Big/\binom m{q-1}
                                            = 1 + \frac{m-q+1}{q} = O(1). $$
It follows that for any set $Y\seq\{0,1\}^m$ with $X\seq Y$ and $|Y|=|B_q|$
we have
  $$ \sig(X)/|X| \le (\sig(Y)/|Y|) \cdot (|Y|/|X|) \ll \sig(Y)/|Y|, $$
showing that it suffices to prove \refe{sigX-toprove} under the assumption
$|X|=|B_q|$.

Using partial summation, we get
\begin{align*}
  \sig(X) &= \sum_{x,y\in X} |\{v\in\{0,1\}^m\colon \<x+y,v\>=0\}| \\
          &= \sum_{x,y\in X} 2^{m-|x+y|} \\
          &= \sum_{k=0}^m 2^{m-k} |\{(x,y)\in X\times X\colon |x+y|=k \}| \\
          &= \sum_{k=0}^{m-1}
                     2^{m-1-k} |\{(x,y)\in X\times X\colon |x+y|\le k \}| \\
          &{\hskip 1.75in}  + |\{(x,y)\in X\times X\colon |x+y|\le m \}|,
\end{align*}
and we now apply a result of Bollob\'as and Leader \cite[Corollary~4]{b:bole}
which says (in a dual form, and in the language of set families) that if
$q\in[0,m]$, and $X$ is a set of $m$-dimensional binary vectors with
$|X|=|B_q|$, then for any integer $k\in[0,m]$, the number of pairs $(x,y)\in
X\times X$ with $|x+y|\le k$ is maximized when $X=B_q$. As a result, we can
replace our present assumption $|X|=|B_q|$ with the stronger assumption
$X=B_q$.

For $r\in[0,m]$, write $S_r:=\{v\in\{0,1\}^m\colon |v|=r\}$; thus,
$B_q=S_0\cup\ldots\cup S_q$, and, in view of \refe{qm},
  $$ \frac{|S_{r-1}|}{|S_r|} = \binom{m}{r-1}\big/\binom mr
        = \frac{r}{m-r+1} < \frac{C}{1-C} < 1,\quad 1\le r\le q, $$
implying
\begin{equation}\label{e:Si}
  \sum_{r=0}^q (q+1-r)^2 |S_r| \le
          \sum_{r=0}^q \(\frac{C}{1-C}\)^{q-r} (q+1-r)^2 |S_q|
                                                       \ll |S_q| \le |B_q|.
\end{equation}
We now claim that to prove \refe{sigX-toprove} with $X=B_q$, it suffices to
prove it in the case where $X=S_r$, for all $r\in[0,m]$. To see this, we
notice that if \refe{sigX-toprove} is established in this special case, then,
by the Cauchy-Schwartz inequality and \refe{Si},
\begin{align*}
  \sig(B_q)
    &= \sum_{v\in\{0,1\}^m} \Bigg( \sum_{r=0}^q (q+1-r)\sqrt{|S_r|}
                    \cdot \frac{|N_{S_r}(v)|}{(q+1-r)\sqrt{|S_r|}} \Bigg)^2 \\
    &\le \sum_{v\in\{0,1\}^m} \Bigg( \sum_{r=0}^q (q+1-r)^2|S_r| \Bigg)
               \sum_{r=0}^q \frac1{(q+1-r)^2}\,\frac{|N_{S_r}(v)|^2}{|S_r|} \\
    &\ll |B_q| \sum_{r=0}^q \frac1{(q+1-r)^2}\,\frac1{|S_r|}
                                        \sum_{v\in\{0,1\}^m} |N_{S_r}(v)|^2 \\
    &\ll \frac{\phi^{2m}}{\sqrt m}\, |B_q| \sum_{r=0}^q \frac1{(q+1-r)^2} \\
    &\ll \frac{\phi^{2m}}{\sqrt m}\, |B_q|.
\end{align*}
We thus can assume that $X=S_r$ for some $r\in[0,m]$. Therefore,
  $$ |N_X(v)| = \begin{cases}
                \binom{m-|v|}r\ &\text{if}\ |v|\le m-r, \\
                0               &\text{if}\ |v|>m-r.
              \end{cases} $$
Consequently,
  $$ \sig(X)/|X|
       = \sum_{i=0}^{m-r} \binom mi \binom{m-i}r^2 \Big/ \binom mr
       = \sum_{i=0}^{m-r} \binom{m-i}r \binom{m-r}i, $$
and the result now follows from Lemma~\refl{Kn1}.
\end{proof}

\begin{proof}[Proof of Theorem~\reft{Kn2}]
Suppose that $m\ge 1$ and $\est\ne X,Y\seq\{0,1\}^m$; we want to show that
$e(X,Y)\ll(\phi^{m}/\sqrt m)\sqrt{|X||Y|}$.

We start with the observation that if there is a vertex $x\in X$ with
$|N_Y(x)|<e(X,Y)/(2|X|)$, then, letting $X':=X\stm\{x\}$, we have $X'\ne\est$
and
  $$ \frac{e(X',Y)}{\sqrt{|X'||Y|}} \ge \frac{e(X,Y)}{\sqrt{|X||Y|}}: $$
this follows readily from $e(X',Y)=e(X,Y)-|N_Y(x)|$ and $|X'|=|X|-1$. A
similar remark applies to the vertices $y\in Y$ having ``too few'' neighbors
in $X$. Repeating this procedure, we ensure that $|N_Y(x)|\ge e(X,Y)/(2|X|)$
for every vertex $x\in X$, and that $|N_X(y)|\ge e(X,Y)/(2|Y|)$ for every
vertex $y\in Y$.

We keep using the notation $|v|$ for the number of non-zero coordinates of a
vector $v\in\R^m$. Let $m_1:=\max\{|x|\colon x\in X\}$, and choose
arbitrarily a vertex $x\in X$ with $|x|=m_1$. Similarly, let
$m_2:=\max\{|y|\colon y\in Y\}$ and choose $y\in Y$ with $|y|=m_2$. We have
  $$ \frac{e(X,Y)}{2|X|} \le |N_Y(x)| \le \sum_{k=0}^{m_2} \binom{m-m_1}{k} $$
and
  $$ \frac{e(X,Y)}{2|Y|} \le |N_X(y)| \le \sum_{k=0}^{m_1} \binom{m-m_2}{k}. $$
To complete the proof, we now show that
  $$ P := \sum_{k=0}^{m_2} \binom{m-m_1}{k}
             \cdot \sum_{k=0}^{m_1} \binom{m-m_2}{k} \ll \frac{\phi^{2m}}{m} $$
uniformly in $m_1,m_2\in[0,m]$.

Assume for definiteness that $m_1\le m_2$. If $m_2>(m-m_1)/2$, then replacing
$m_2$ with $\lfloor(m-m_1)/2\rfloor$ enlarges the second factor in the
definition of $P$, whereas the first factor can get at most twice smaller. As
a result, we can assume that
\begin{equation}\label{e:n2small}
  m_2\le(m-m_1)/2,
\end{equation}
and (in view of $m_1\le m_2$) also that
\begin{equation}\label{e:n1small}
  m_1\le(m-m_2)/2;
\end{equation}
consequently,
  $$ 0\le m_1,m_2\le m/2. $$

Write $\mu_i:=m_i/m\ (i\in\{1,2\})$. Taking into account \refe{n2small} and
\refe{n1small}, by \refe{binomial1} we get
  $$ P \le e^{f(\mu_1,\mu_2)m}, $$
and if both $\mu_2/(1-\mu_1)$ and $\mu_1/(1-\mu_2)$ are bounded away from
$1/2$, then indeed
\begin{equation}\label{e:strong-bound}
  P \ll \frac 1m\,e^{f(\mu_1,\mu_2)m}
\end{equation}
by \refe{binomial2}.

Let $\Ome_0:=[0,0.3)^2$, and write $M:=\max_{\Ome\stm\Ome_0} f$. Since the
maximum of $f$ on $\Ome$ is attained at the unique point $(x_0,y_0)\in\Ome_0$
(as explained at the beginning of this section), we have
$M<f(x_0,y_0)=2\ln\phi$; hence,
 $P\le e^{mM}=o(\phi^{2m}/m)$ for $(\mu_1,\mu_2)\notin\Ome_0$. On the other
hand, if $(\mu_1,\mu_2)\in\Ome_0$, then
  $$ \frac{\mu_1}{1-\mu_2} \le \frac37
                 \quad \text{and}\quad \frac{\mu_1}{1-\mu_2} \le \frac37, $$
which in view of \refe{strong-bound} gives
  $$ P \ll \frac1m\,e^{f(x_0,y_0)m} = \frac{\phi^{2m}}m. $$
This completes the proof of Theorem~\reft{Kn2}.
\end{proof}

\section*{Acknowledgements}
The author is grateful to Noga Alon for the idea behind the proof of
Theorem~\reft{Kn2}, and to Terry Tao for the proof of Lemma~\refl{tt}.

\vfill

\ifthenelse{\equal{\showall}{yes}}{}{\bigskip\enddocument}

\newpage
\appendix
\section{Lemma~\refl{tt} for Hermitian matrices}

A version of Lemma~\refl{tt} for Hermitian matrices is as follows.
\begin{lemma}\label{l:tt-square}
Let $n\ge 1$ be an integer and $K\ge 1$ a real number. For any non-zero
Hermitian matrix $A$ with $\|A\|_\infty\le K\|A\|$, there exists a vector
$z\in\C^n$ such that $\|Az\|>\frac14\|A\|\|z\|$ and $\ell(z)<8K+1$.
\end{lemma}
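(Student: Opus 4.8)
The plan is to adapt the proof of Lemma~\refl{tt}, exploiting two features of the Hermitian case. First, when $A$ is Hermitian, $\|A\|$ equals its spectral radius, so there is a \emph{unit eigenvector} $x$ of $A$, say $Ax=\lam x$ with $|\lam|=\|A\|$, whence $|x^\ast Ax|=\|A\|$; the idea is to work with the Hermitian form $x^\ast Ax$ in place of $\|Ax\|^2$. Second, since $A=A^\ast$, for all $u,v\in\C^n$ one has
  $$ |u^\ast Av| = |\<Au,v\>| \le \|Au\|_\infty\|v\|_1
                   \le \|A\|_\infty\|u\|_\infty\|v\|_1
                   \le K\|A\|\,\|u\|_\infty\|v\|_1, $$
so the cross-term estimate carries a single factor $\|A\|_\infty/\|A\|\le K$ rather than the product $\|A\|_1\|A\|_\infty/\|A\|^2\le K^2$ that appears in the proof of Lemma~\refl{tt}; this is exactly what will let us replace the ratio $8K^2+1$ there by $M:=8K+1$ here.

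Concretely, I would fix a unit eigenvector $x=(x_1\longc x_n)^t$ as above and decompose $x=\sum_{k=-\infty}^\infty x^{(k)}$ precisely as in the proof of Lemma~\refl{tt}, so that $\ell(x^{(k)})<M$ whenever $x^{(k)}\ne0$ and $\sum_k\|x^{(k)}\|^2=1$. From $\|A\|=|x^\ast Ax|\le\sum_{k,l}|(x^{(k)})^\ast Ax^{(l)}|$ I would estimate the summands with $l\ge k+2$ by the displayed inequality, putting the $\|\cdot\|_\infty$-factor on the lower block (using $\|x^{(k)}\|_\infty<M^{k+1}$) and the $\|\cdot\|_1$-factor on the higher one, and then absorbing $\|x^{(l)}\|_1$ into $\|x^{(l)}\|^2$ via $M^l\le|x_i|$; the geometric-series computation from the proof of Lemma~\refl{tt} now yields the bound $\frac{K}{M-1}\|A\|=\frac18\|A\|$, and the symmetry $|(x^{(k)})^\ast Ax^{(l)}|=|(x^{(l)})^\ast Ax^{(k)}|$ then gives $\sum_{|k-l|\ge2}|(x^{(k)})^\ast Ax^{(l)}|\le\frac14\|A\|$.

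For the near-diagonal terms I would argue by contradiction: if the conclusion of the lemma fails, then $\|Ax^{(k)}\|\le\frac14\|A\|\|x^{(k)}\|$ for every $k$, so by Cauchy--Schwarz $|(x^{(k)})^\ast Ax^{(l)}|\le\|x^{(k)}\|\,\|Ax^{(l)}\|\le\frac14\|A\|\,\|x^{(k)}\|\|x^{(l)}\|$. Summing over $|k-l|\le1$ and using $2\|x^{(k)}\|\|x^{(k+1)}\|\le\|x^{(k)}\|^2+\|x^{(k+1)}\|^2$ --- strictly, since $x\ne0$ forces the sequence $(\|x^{(k)}\|)_k$ to be non-constant --- I would obtain $\sum_{|k-l|\le1}|(x^{(k)})^\ast Ax^{(l)}|<\frac34\|A\|$. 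Adding the two estimates gives $\sum_{k,l}|(x^{(k)})^\ast Ax^{(l)}|<\|A\|$, contradicting $\|A\|=|x^\ast Ax|$. Hence some $x^{(k)}$ satisfies $\|Ax^{(k)}\|>\frac14\|A\|\|x^{(k)}\|$, and it has $\ell(x^{(k)})<M=8K+1$, as desired.

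I do not foresee a real obstacle here: the one genuinely Hermitian step is the passage from $\|Ax\|^2$ to the bilinear form $x^\ast Ax$, which is what decouples the two operator-norm factors and produces a linear rather than quadratic dependence on $K$; the rest is bookkeeping --- verifying that the constants combine as $\frac14+\frac34=1$ and that the arithmetic-mean--geometric-mean step is strict.
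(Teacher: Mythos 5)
Your proposal is correct and follows essentially the same route as the paper's own proof in the appendix: the same $M$-adic level decomposition of a maximizer of the Hermitian form $|\<Ax,x\>|$, the same single-factor $K\|A\|\|u\|_\infty\|v\|_1$ cross-term bound giving $\frac18\|A\|$ for $l\ge k+2$, and the same strict near-diagonal estimate via Cauchy--Schwarz leading to the contradiction $\sum_{k,l}|\<Ax^{(k)},x^{(l)}\>|<\|A\|$. The only cosmetic difference is that you take $x$ to be an eigenvector rather than merely a maximizer of $|\<Ax,x\>|$, which for Hermitian $A$ amounts to the same thing.
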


\begin{proof}
Fix a unit-length vector $x=(x_1\longc x_n)\in\C^n$ with $|\<Ax,x\>|=\|A\|$
and let $M:=8K+1$. Consider the decomposition
  $$ x = \sum_{k=-\infty}^\infty x^{(k)}, $$
where for every integer $k$, the vector $x^{(k)}=(x^{(k)}_1\longc x^{(k)}_n)$
is defined by
  $$ x^{(k)}_i := \begin{cases}
       x_i &\text{ if}\ M^k\le |x_i|<M^{k+1}, \\
       0   &\text{ otherwise}.
                  \end{cases} $$
Notice, that $\ell(x^{(k)})<M=8K+1$ whenever $x^{(k)}\ne 0$. Furthermore, we
have
\begin{equation}\label{e:Axx-large-app}
  \|A\| = |\<Ax,x\>| \le \sum_{k,l=-\infty}^\infty |\<Ax^{(k)},x^{(l)}\>|
\end{equation}
and, since the vectors $x^{(k)}$ are pairwise orthogonal,
  $$ \sum_{k=-\infty}^\infty \|x^{(k)}\|^2 = \|x\|^2 = 1. $$
Recalling that $\|A\|_\infty\le K\|A\|$ implies
 $|\<Au,v\>|\le K\|A\|\|u\|_\infty\|v\|_1$ for all $u,v\in\C^n$, the
contribution to the right-hand side of \refe{Axx-large-app} of the summands
with $l\ge k+2$ can be estimated as follows:
\begin{align*}
  \sum_{k,l\colon l\ge k+2} |\<Ax^{(k)},x^{(l)}\>|
    &\le K\|A\| \sum_{k,l\colon l\ge k+2} M^{k+1}
                           \sum_{i\in[n]\colon M^l\le |x_i|<M^{l+1}} |x_i| \\
    &= K\|A\| \sum_{l=-\infty}^\infty
                  \sum_{i\in[n]\colon M^l\le |x_i|<M^{l+1}} |x_i|
                                            \sum_{k=-\infty}^{l-2} M^{k+1} \\
    &\le \frac{K}{M-1}\,\|A\|\, \sum_{l=-\infty}^\infty
                         \sum_{i\in[n]\colon M^l\le |x_i|<M^{l+1}} |x_i|^2 \\
    &=   \frac18\,\|A\|\|x\|^2 \\
    &=   \frac18\,\|A\|.
\end{align*}
Since $A$ is Hermitian, we conclude that
\begin{equation}\label{e:off-diag-app}
  \sum_{k,l\colon|k-l|\ge 2} |\<Ax^{(k)},x^{(l)}\>| \le \frac14\,\|A\|.
\end{equation}

Assuming that the assertion of the theorem fails to hold, we have
$\|Ax^{(k)}\|\le\frac14\,\|A\|\|x^{(k)}\|$ for every integer $k$. Hence,
under this assumption, for any fixed integer $d$,
\begin{align*}
  \sum_{k,l\colon k-l=d} |\<Ax^{(k)},x^{(l)}\>|
    &\le \frac14\,\|A\| \sum_{l=-\infty}^\infty \|x^{(l)}\|\|x^{(l+d)}\| \\
    &\le   \frac14\,\|A\|\sum_l \|x^{(l)}\|^2 \\
    &= \frac14\,\|A\|,
\end{align*}
the second inequality being strict unless $d=0$. It follows that
  $$ \sum_{k,l\colon|k-l|\le 1} |\<Ax^{(k)},x^{(l)}\>| < \frac34\,\|A\|; $$
along with \refe{off-diag-app} this yields
  $$ \sum_{k,l=-\infty}^\infty |\<Ax^{(k)},x^{(l)}\>| < \|A\|, $$
contradicting \refe{Axx-large-app}.
\end{proof}

\end{document}